\theoremstyle{plain}							
\newtheorem{Lem}{Lemma}[section]
\newtheorem{Thm}[Lem]{Theorem}
\newtheorem{Prop}[Lem]{Proposition}
\newtheorem{Cor}[Lem]{Corollary}
\theoremstyle{definition}
\newtheorem*{Def}{Definition}
\begin{document}

\author{Jordan Awan}
\address{Dept. of Statistics, Purdue University}
\author{Claire Frechette}
\address{Mathematics Dept., University of Minnesota Twin Cities}
\author{Yumi Li}
\address{Willamette University}
\author{Elizabeth McMahon}
\address{Mathematics Dept., Lafayette College, Easton, PA}
\email{mcmahone@lafayette.edu}

\title{Demicaps in AG(4,3) and Their Relation to Maximal Cap Partitions}
\thanks{Research supported by NSF grant DMS-1063070.}


\date{}

 \keywords{Finite affine geometry, maximal caps, affine transformations, outer automorphisms of $S_6$.  MSC: 51E15, 51E22}

\begin{abstract}
In this paper, we introduce a fundamental substructure of maximal caps in the affine geometry $AG(4,3)$ that we call \emph{demicaps}.  Demicaps provide a direct link to particular partitions of $AG(4,3)$ into 4 maximal caps plus a single point.  The full collection of 36 maximal caps that are in exactly one partition with a given cap $C$ can be expressed as unions of two disjoint demicaps taken from a set of 12 demicaps; these 12 can also be found using demicaps in $C$.  The action of the affine group on these 36 maximal caps includes actions related to the outer automorphisms of $S_6$.
\end{abstract}

\maketitle

\section{Introduction}\label{S:Intro}


We introduce a new structure called a demicap in $AG(4,3)$, which is an extremely well-studied finite geometry.  A demicap is a 10-point configuration that is a fundamental building block for the maximal caps in the geometry.   In addition,  demicaps provide a direct link to partitions of $AG(4,3)$ into disjoint maximal caps plus a distinguished point.  Thus, demicaps provide valuable information about maximal caps and partitions. 

$AG(4,3)$ can be partitioned into 4 disjoint maximal caps plus a distinguished point.  Those partitions are in two equivalence classes under affine transformations.  One of those classes contains two pairs of maximal caps that appear in only one partition.  These are the partitions we focus on in this paper.  If $C$ is a maximal cap, then there are exactly 36 maximal caps disjoint from $C$  that appear in exactly one partition with $C$, and demicaps can be used to find these 36 caps. Further, any transformation of $AG(4,3)$ that fixes $C$ as a set necessarily permutes the 36 maximal caps, and those permutations act in the same way as an outer autmorphism of $S_6$.



The problem of determining the size and structure of maximal caps in $AG(n,3)$  has received a good deal of attention from many researchers.  Many mathematicians are very interested in the ``cap-set problem,'' the problem of finding bounds on the size of the maximal caps in $AG(n,3)$.  In 2016, new upper bounds on the size of such sets were found by Ellenberg and Gijswijt  \cite{MR3583358}.  A more complete understanding of the geometric structure of maximal caps could have implications for cap size calculations in other  dimensions.  The structures we introduce and examine in this paper could assist with this project.

\medskip
We now define the basic structures we will be examining in this paper.  
A {\em cap} in the affine geometry $AG(n,3)$ is a set of points containing no lines; a {\em maximal cap} is a cap of largest possible size.  A cap is  {\em complete} if it is not a subset of a larger cap.  There are caps in $AG(n,3), \, n \ge 3$, which are complete but not maximal.

The finite geometry $AG(n,3)$ is an $n$-dimensional vector space over $\mathbb{Z}_3$; three points are on a line if and only if the corresponding vectors sum to $\vec0$. The full transformation group of $AG(n,3)$  is the affine group $\mathit{Aff}(n,3) = GL(n,3) \ltimes \mathbb{Z}_3^n$, where  $(A,\vec{b})$ in the semidirect product corresponds to the transformation $\vec{v}\mapsto A\vec{v}+\vec{b}$.  

All maximal caps in the geometry $AG(4,3)$ have 20 points (G.\ Pellegrino, \cite{MR0363952}), and all are affinely equivalent (R.\  Hill, \cite{MR695829}).  Each maximal cap $C$ in $AG(4,3)$ has a unique {\em anchor point $a$}, where the 20 points of $C$ consist of 10 pairs of points, so that each pair makes a line with $a$.  

Partitions of affine and projective space into caps have been examined by several researchers (see, for example,  \cite{MR2005528} and \cite{MR718954}). In this paper, we will focus on partitions of $AG(4,3)$ into four disjoint maximal caps together with their common anchor point, which were first explored in \cite{MR3262358}.  This gives a partition of $AG(4,3)$ into five caps, the smallest number possible; these particular partitions also have the largest possible number of maximal caps.

\smallskip
In \cite{MR3262358}, it was shown that  any pair of disjoint maximal caps in $AG(4,3)$ must necessarily have the same anchor point; a pair of disjoint maximal caps appear as blocks in either 1, 2 or 6 distinct partitions, so such pairs are called {\em 1-completable, 2-completable, or 6-completable pairs}, respectively.
The affine group $\mathit{Aff}(4,3)$ acts transitively on 1-completable pairs, transitively on 2-completable pairs and transitively on 6-completable pairs.  In this paper, we concentrate on 1-completeable pairs. 

Any maximal cap $C$ has exactly 36  maximal caps disjoint from $C$ that appear with $C$ in precisely one partition of $AG(4,3)$. Additionally, $C$ can be written as the union of two disjoint demicaps in 36 ways. We show that these two sets are connected by a bijection.  
Not only are the demicap partitions of a maximal cap $C$  related to the maximal caps in exactly one  partition with $C$, but one can also use the demicaps to find the other pair of maximal caps in the partition.  

The structure and number of demicaps is explored in Section \ref{S:demics}.  In Section \ref{S:results}, we give the correspondence between a pair of demicaps whose union is $C$ and one maximal cap $C'$ making a 1-completable pair with $C$. We then use the same pair of demicaps to determine the other two maximal caps in the partition containing $C$ and $C'$.

In Section \ref{S:36caps}, we show that there are two collections of six demicaps $\{R_1,\dots, R_6\}$ and $\{C_1,\dots, C_6\}$ so that every one of the 36 maximal caps in a unique partition with $C$ is $R_i \cup C_j$ for some $i,j \leq 6$. These demicaps can be found from any pair of disjoint demicaps whose union is $C$.  We finish with an exploration of the full set of 36 maximal caps that are in exactly one partition with a maximal cap $C$.  The subgroup of the affine group that fixes $C$ acts on the set of 36 caps as the full automorphism group of $S_6$, including outer automorphism actions.


Thus, what we will show in this paper is that a demicap, a natural substructure of a maximal cap in $AG(4,3)$, encodes all the information necessary to find one of the 36 maximal caps that are in exactly one partition with that cap, as well as a way to identify the other two maximal caps in that partition.  This provides a full accounting of the partitions of $AG(4,3)$ consisting of two 1-completable pairs of maximal caps.


\section{Partitions of $AG(4,3)$ into disjoint maximal caps plus their common anchor point}\label{S:cpsptns}

Throughout our exposition, we will use lower case letters,  $a$, for points in $AG(4,3)$.  Occasionally, we will want to consider those points as vectors in $GF(3)^4$, in which case we will write the point with vector notation, as $\vec{a}$.

Using the same visualization scheme as is used by Davis and Maclagan \cite{MR2005098},\footnote{The lines in a $3\times3$ subgrid are three points horizontally, vertically, or diagonally, including diagonals as on a torus; the points in all other lines appear in three subgrids that are in the same position as a line in the $3\times 3$ subgrids, so that, when  the three subgrids are superimposed, those points either lie on top of one another, or they are in the same positions as a line in a subgrid. See Figure \ref{F:AG43cap}.} one maximal cap of $AG(4,3)$ is shown below in Figure \ref{F:AG43cap}. (This  is the first maximal cap with anchor point $\vec{0}$ in the lexicographic order.) The anchor point is in the upper left corresponding to $\vec{0}$; it is easily verified that the cap consists of 10 lines through $\vec{0}$.  

\newsavebox{\grid}
\savebox{\grid}(0,78){\setlength{\unitlength}{.02cm}
\multiput(0,94)(0,15){4}{\line(1,0){45}}		
\multiput(0,94)(15,0){4}{\line(0,1){45}}		
\multiput(47,94)(0,15){4}{\line(1,0){45}}	
\multiput(47,94)(15,0){4}{\line(0,1){45}}	
\multiput(94,94)(0,15){4}{\line(1,0){45}}	
\multiput(94,94)(15,0){4}{\line(0,1){45}}	
\multiput(0,47)(0,15){4}{\line(1,0){45}}		
\multiput(0,47)(15,0){4}{\line(0,1){45}}		
\multiput(47,47)(0,15){4}{\line(1,0){45}}	
\multiput(47,47)(15,0){4}{\line(0,1){45}}	
\multiput(94,47)(0,15){4}{\line(1,0){45}}	
\multiput(94,47)(15,0){4}{\line(0,1){45}}	
\multiput(0,0)(0,15){4}{\line(1,0){45}}		
\multiput(0,0)(15,0){4}{\line(0,1){45}}		
\multiput(47,0)(0,15){4}{\line(1,0){45}}	
\multiput(47,0)(15,0){4}{\line(0,1){45}}	
\multiput(94,0)(0,15){4}{\line(1,0){45}}	
\multiput(94,0)(15,0){4}{\line(0,1){45}}	
}

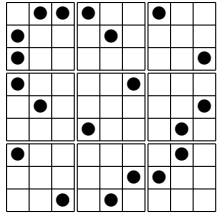
\begin{figure}[h]
\begin{picture}(94,80)\setlength{\unitlength}{.02cm}		%
\usebox{\grid}
\put (22.5,131){\circle* {8}}	
\put (37.5,131){\circle* {8}}	
\put (7.5,116){\circle* {8}}		
\put (7.5,101){\circle* {8}}		
\put (54.5,131){\circle* {8}}	
\put (69.5,116){\circle* {8}}	
\put (101.5,131){\circle* {8}}	
\put (131.5,101){\circle* {8}}	
\put (7.5,84){\circle* {8}}		
\put (22.5,69){\circle* {8}}		
\put (84.5,84){\circle* {8}}		
\put (54.5,54){\circle* {8}}		
\put (131.5,69){\circle* {8}}	
\put (116.5,54){\circle* {8}}	
\put (7.5,37){\circle* {8}}		
\put (37.5,7){\circle* {8}}		
\put (84.5,22){\circle* {8}}		
\put (69.5,7){\circle* {8}}		
\put (116.5,37){\circle* {8}}	
\put (101.5,22){\circle* {8}}	
\end{picture}
\begin{center}
\caption{A maximal cap in $AG(4,3)$ with anchor point in the upper left.}
\label{F:AG43cap}
\end{center}
\end{figure}

In \cite{MR3262358}, it was shown that $AG(4,3)$ can be partitioned into four disjoint maximal caps together with their common anchor point.  One such partition is shown in Figure \ref{F:AG43ptn}.  Any two maximal caps with different anchor points necessarily intersect (Prop. 3.4, \cite{MR3262358}), so all maximal caps in such a partition have the same anchor point, the blank box in the upper left.

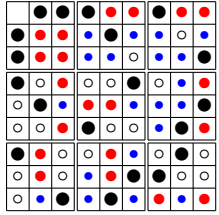
\begin{figure}[h]
\begin{picture}(94,80)\setlength{\unitlength}{.02cm}		%
\usebox{\grid}  %
\put (22.5,131){\circle* {8}}	
\put (37.5,131){\circle* {8}}	
\put (7.5,116){\circle* {8}}		
\put (22.5,116){\textcolor{red}{\circle* {7}}}	
\put (37.5,116){\textcolor{red}{\circle* {7}}}	
\put (7.5,101){\circle* {8}}		
\put (22.5,101){\textcolor{red}{\circle* {7}}}	
\put (37.5,101){\textcolor{red}{\circle* {7}}}	
\put (54.5,131){\circle* {8}}	
\put (69.5,131){\textcolor{red}{\circle* {7}}}	
\put (84.5,131){\textcolor{red}{\circle* {7}}}	
\put (54.5,116){\textcolor{blue}{\circle* {5}}}	
\put (69.5,116){\circle* {8}}	
\put (84.5,116){\textcolor{blue}{\circle* {5}}}	
\put (54.5,101){\textcolor{blue}{\circle* {5}}}	
\put (69.5,101){\textcolor{blue}{\circle* {5}}}	
\put (84.5,101){{\circle {5}}}	
\put (101.5,131){\circle* {8}}	
\put (116.5,131){\textcolor{red}{\circle* {7}}}	
\put (131.5,131){\textcolor{red}{\circle* {7}}}	
\put (101.5,116){\textcolor{blue}{\circle* {5}}}	
\put (116.5,116){{\circle {5}}}	
\put (131.5,116){\textcolor{blue}{\circle* {5}}}	
\put (101.5,101){\textcolor{blue}{\circle* {5}}}	
\put (116.5,101){\textcolor{blue}{\circle* {5}}}	
\put (131.5,101){\circle* {8}}	
%
\put (7.5,84){\circle* {8}}		
\put (22.5,84){{\circle {5}}}		
\put (37.5,84){\textcolor{red}{\circle* {7}}}		
\put (7.5,69){{\circle {5}}}		
\put (22.5,69){\circle* {8}}		
\put (37.5,69){\textcolor{blue}{\circle* {5}}}		
\put (7.5,54){{\circle {5}}}	
\put (22.5,54){{\circle {5}}}		
\put (37.5,54){\textcolor{red}{\circle* {7}}}		
\put (54.5,84){{\circle {5}}}		
\put (69.5,84){{\circle {5}}}		
\put (84.5,84){\circle* {8}}		
\put (54.5,69){\textcolor{red}{\circle* {7}}}		
\put (69.5,69){\textcolor{red}{\circle* {7}}}		
\put (84.5,69){\textcolor{blue}{\circle* {5}}}		
\put (54.5,54){\circle* {8}}		
\put (69.5,54){{\circle {5}}}		
\put (84.5,54){{\circle {5}}}		
\put (101.5,84){{\circle {5}}}	
\put (116.5,84){\textcolor{blue}{\circle* {5}}}	
\put (131.5,84){\textcolor{red}{\circle* {7}}}	
\put (101.5,69){\textcolor{blue}{\circle* {5}}}	
\put (116.5,69){\textcolor{blue}{\circle* {5}}}	
\put (131.5,69){\circle* {8}}	
\put (101.5,54){\textcolor{blue}{\circle* {5}}}	
\put (116.5,54){\circle* {8}}	
\put (131.5,54){\textcolor{red}{\circle* {7}}}	
%
\put (7.5,37){\circle* {8}}		
\put (22.5,37){\textcolor{red}{\circle* {7}}}		
\put (37.5,37){{\circle {5}}}		
\put (7.5,22){{\circle {5}}}		
\put (22.5,22){\textcolor{red}{\circle* {7}}}		
\put (37.5,22){{\circle {5}}}		
\put (7.5,7){{\circle {5}}}		
\put (22.5,7){\textcolor{blue}{\circle* {5}}}		
\put (37.5,7){\circle* {8}}		
\put (54.5,37){{\circle {5}}}		
\put (69.5,37){\textcolor{red}{\circle* {7}}}		
\put (84.5,37){\textcolor{blue}{\circle* {5}}}		
\put (54.5,22){\textcolor{blue}{\circle* {5}}}		
\put (69.5,22){\textcolor{red}{\circle* {7}}}			
\put (84.5,22){\circle* {8}}		
\put (54.5,7){\textcolor{blue}{\circle* {5}}}		
\put (69.5,7){\circle* {8}}		
\put (84.5,7){\textcolor{blue}{\circle* {5}}}		
\put (101.5,37){{\circle {5}}}	
\put (116.5,37){\circle* {8}}	
\put (131.5,37){{\circle {5}}}	
\put (101.5,22){\circle* {8}}	
\put (116.5,22){{\circle {5}}}	
\put (131.5,22){{\circle {5}}}	
\put (101.5,7){\textcolor{red}{\circle* {7}}}		
\put (116.5,7){\textcolor{blue}{\circle* {5}}}		
\put (131.5,7){\textcolor{red}{\circle* {7}}}		
\end{picture}\vspace{-.1in}
\begin{center}
\caption{A partition of $AG(4,3)$ into 4 maximal caps plus their associated anchor point.}
\label{F:AG43ptn}
\end{center}
\end{figure}

Now, we will give further results from \cite{MR3262358} that will be needed for this paper.  The terminology used is given in the following definition.

\begin{Def}
Given a pair of disjoint maximal caps $C$ and $C'$ (necessarily with the same anchor point), if $C$ and $C'$  are in exactly $k$  partitions, then we call $\{C,C'\}$  a {\em $k$-completable pair}.  
\end{Def}

The next proposition shows that the only possible values of $k$ for which two  maximal caps are a $k$-completable pair are $k=1,2,6$. it also gives further information about the structure of the partitions. 

\begin{Prop}[Prop. 3.5, Prop. 3.6, Thm. 3.8, \cite{MR3262358}]\label{P:REU}
Let $C$ be a maximal cap with anchor point $a$; let  $\mathcal{T}$ be the group of linear transformations of $AG(4,3)$ that fix  $C$ as a set, and let $T \in \mathcal{T}$.  

(1) There are  198 maximal caps disjoint from $C$; every one of those together with $C$ can be extended to a partition of $AG(4,3)$ into four maximal caps plus the common anchor point. Of those 198 maximal caps, 36 make a 1-completable pair with $C$; 90 make a 2-completable pair with $C$; and 72 make a 6-completable pair with $C$.  

(2) If $\{C,C'\}$ is an $i$-completable pair for $i=1,2,6$, then so is $\{T(C),T(C)'\}$.  

(3)  Let $\{\{a\},A,B,C,D\}$ be a partition of $AG(4,3)$ into 4 mutually disjoint maximal caps together with their anchor point.  Then the maximal caps can be paired so that exactly one of two possibilities holds:

\begin{itemize}
\item $\{A,B\}$ and $\{C,D\}$ are both 1-completable pairs.
\item $\{A,B\}$ and $\{C,D\}$ are both 2-completable pairs.
\end{itemize}

In both cases, $\{A,C\}$, $\{A,D\}$,  $\{B,C\}$,  and $\{B,D\}$  are 6-completable pairs.

(4)  A 6-completable pair $\{C,C'\}$  appears in exactly one partition with two 1-completable pairs and in five partitions with two 2-completable pairs.

(5) The affine group $\mathit{Aff}(4,3)$ acts on the set of partitions of $AG(4,3)$ into 4 mutually disjoint maximal caps and the associated anchor point $a$. There  are  two orbits under that action: one orbit consists of partitions with two 1-completable pairs, and the other consists of partitions with two 2-completable pairs.  
\end{Prop}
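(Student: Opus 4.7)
The plan is to fix, via Hill's transitivity of $\mathit{Aff}(4,3)$ on maximal caps, a canonical cap $C$ with anchor $\vec0$, and analyze the action of its linear stabilizer $\mathcal{T}$ on caps disjoint from $C$. Every maximal cap disjoint from $C$ has anchor $\vec0$ (Proposition 3.4 of \cite{MR3262358}), so such caps are $20$-subsets of the $60$ points in $AG(4,3)\setminus(C\cup\{\vec0\})$. A direct enumeration of the line-free $20$-subsets of this $60$-set produces the count $198$, and for each disjoint $C'$ the completability $k(C')$ equals the number of ways to partition the remaining $40$ points into two disjoint maximal caps (automatically anchored at $\vec0$).

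Part (2) is immediate equivariance: any $T\in\mathcal{T}$ bijects the $k$ partitions extending $\{C,C'\}$ onto the $k$ partitions extending $\{C,T(C')\}$, so $k(C')$ is constant on each $\mathcal{T}$-orbit of disjoint caps. Establishing part (1) then reduces to identifying three $\mathcal{T}$-orbits on the $198$ disjoint caps of sizes $36$, $90$, $72$, together with a representative of each whose completability is verified to be $1$, $2$, and $6$. For part (5), combining (2) with part (3) and Hill's transitivity shows that $\mathit{Aff}(4,3)$ preserves the pair-type of a partition, so its action on $4$-cap partitions has at most two orbits, and exhibiting one partition of each type gives exactly two.

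Part (3) is the structural heart and the main obstacle. For a partition $\{\{\vec0\},A,B,C,D\}$, the six unordered pairs must be sorted by completability, and the claim is that they split as two pairs of a common type ($1$- or $2$-completable) forming a perfect matching, with the other four pairs all $6$-completable. The hard step is ruling out mixed matchings: I would attack this by exhibiting, for each matching pair $\{A,B\}$ inside a given partition, an affine involution that swaps $A\leftrightarrow B$ setwise and necessarily permutes the complement $\{C,D\}$; uniqueness of such an involution in the $1$-completable case forces $C\leftrightarrow D$ and makes $\{C,D\}$ also $1$-completable, while a larger joint symmetry group handles the $2$-completable case. This joint symmetry is what makes the pair-type of a partition a well-defined invariant and eliminates the mixed scenario.

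For part (4), fix a $6$-completable pair $\{C,C'\}$, and let $x$ and $y=6-x$ count the completing partitions of type $1$-$1$ and $2$-$2$, respectively. Counting partitions through $C$ by matching type using (1): the $36$ $1$-completable pairs through $C$ each lie in exactly one partition, yielding $36$ type-$1$-$1$ partitions through $C$, while the $90$ $2$-completable pairs through $C$ each lie in exactly two, yielding $180$ type-$2$-$2$ partitions through $C$; so the global totals satisfy $N_1:N_2 = 36:180 = 1:5$. Double-counting pairs $(\{C,C'\},P)$ with $\{C,C'\}$ a $6$-completable cross-pair of $P$, and using transitivity of $\mathit{Aff}(4,3)$ on $6$-completable pairs (which makes $x$ a constant across all such pairs) together with the fact that each partition contributes exactly four $6$-completable cross-pairs by (3), gives $x = 6N_1/(N_1+N_2) = 1$, hence $y=5$.
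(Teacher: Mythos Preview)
The paper does not prove this proposition; it is quoted from \cite{MR3262358} (as Prop.\ 3.5, Prop.\ 3.6, Thm.\ 3.8 there) and used as background. So there is no ``paper's own proof'' to compare against, and your proposal should be judged on its own.

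Your treatment of (2) is correct and immediate, and the double-counting arguments for (4) and (5) are clean and valid \emph{conditional on} (1), (3), and the transitivity of $\mathit{Aff}(4,3)$ on $6$-completable pairs (which is also imported from \cite{MR3262358}). The derivation $x=6N_1/(N_1+N_2)=1$ is a nice way to pin down (4). For (1), ``direct enumeration'' is essentially what the cited reference does, so that plan is fine, if computational.

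The real gap is in (3). Your involution sketch has two problems. First, you have not justified the existence of an affine involution swapping $A\leftrightarrow B$ while fixing the given partition: transitivity on $k$-completable pairs gives an element carrying one such pair to another, not an involution interchanging the two caps of a fixed pair. Second, even if such a $\sigma$ exists and permutes $\{C,D\}$, the assertion ``uniqueness of such an involution forces $C\leftrightarrow D$'' is unsupported (nothing prevents $\sigma$ from fixing $C$ and $D$ individually), and in any case a swap $C\leftrightarrow D$ only yields symmetry among the cross pairs $\{A,C\},\{B,D\}$ and $\{A,D\},\{B,C\}$; it does not determine the completability of $\{C,D\}$ itself. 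So the crucial exclusion of a mixed $1$--$2$ matching is not achieved by this argument. Since your plan for (1) already commits to enumeration through a fixed $C$, the most reliable fix is to extend that same computation: list all $216$ partitions through $C$, record the completability of each of the three pairs containing $C$ and of the opposite pair, and read off the $1$--$1$/$2$--$2$ dichotomy directly; transitivity on maximal caps then gives (3) in general, and (5) follows as you outlined.
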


Our goal in this paper is to understand what distinguishes the first of these two orbits geometrically. 


\section{Maximal caps in $AG(4,3)$ and demicaps}\label{S:demics}

Let $C$ be a maximal cap in $AG(n,3)$.  Any point not in $C$ must complete a line with a pair of points from $C$, or $C$ would not be maximal.  How many such lines can a given point complete?  The following proposition answers this question for $AG(n,3)$, $n \leq 6$. (Since our focus is on $AG(4,3)$, that result is stated first.)  .  

\begin{Prop}\label{P:3lines}
A maximal cap $C$ in $AG(4,3)$ consists of 10 pairs of points, each of which completes a line with the anchor point.  Any  point in $AG(4,3)$ other than the anchor point and the points in $C$ completes exactly three lines with pairs of points in $C$.

In $AG(n,3)$, for all other $n$ where the size of a maximal cap is known, a similar result holds. See Table \ref{T:LinesComp}.

\begin{table}[h]
\caption{Known sizes of maximal caps and the number of lines completed by non-anchor points not in the cap.}
\begin{center}
\begin{tabular}{|c|c|c|c|} \hline
Space & Maximal & Anchor? & Number of lines every non- \\
& cap size & & anchor point completes \\ \hline
$AG(2,3)$ & 4 & Yes &  1 \\ 
$AG(3,3)$ & 9 & No &  2 \\ 
$AG(4,3)$ & 20 & Yes &  3 \\ 
$AG(5,3)$ & 45 & No &  5 \\ 
$AG(6,3)$ & 112 & Yes & 10 \\  \hline
\end{tabular}
\end{center}
\label{T:LinesComp}
\end{table}%

\end{Prop}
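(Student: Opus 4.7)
My strategy is a double-counting argument that fixes the average number of lines completed per non-anchor non-cap point, followed by a uniformity step upgrading that average to a pointwise value.

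The first sentence of the proposition merely restates the defining property of the anchor, so I focus on the second. Translate so that the anchor $a = \vec 0$; the anchor condition then forces $-C = C$, and $C$ decomposes as the disjoint union of its $10$ anchor pairs $\{u_i,-u_i\}$. Of the $\binom{20}{2} = 190$ unordered pairs in $C$, the $10$ anchor pairs sum to $\vec 0 = a$, while the remaining $180$ non-anchor pairs $\{u,v\}$ each determine a third point $-(u+v)$ that is nonzero and (by the cap property) lies outside $C$, hence inside the $60$-point complement $AG(4,3)\setminus(C\cup\{a\})$. Writing $m(p)$ for the number of pairs of $C$ whose third point is $p$, the sum $\sum_{p\notin C\cup\{a\}} m(p) = 180$ yields an average value of $3$.

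The main obstacle is upgrading this average to the pointwise equality $m(p) = 3$. A convenient reformulation is $m(p) = \tfrac12 |C \cap (C-p)|$, where $C-p := \{c-p : c \in C\}$: a pair $\{u,v\} \subset C$ sums to $-p$ iff $u \in C$ and $u + p = -v \in -C = C$, with no degenerate $u=v$ case (since $u=v$ would force $u = p \notin C$). So the claim becomes $|C \cap (C-p)| = 6$ for each of the $60$ relevant $p$. By Hill's theorem that all maximal caps in $AG(4,3)$ are affinely equivalent, it suffices to verify this on a single representative---for instance, the cap of Figure~\ref{F:AG43cap}---which reduces to a finite enumeration. A more conceptual route invokes the stabilizer $\mathcal T \le \mathit{Aff}(4,3)$ of $C$, whose action on the $60$-point complement is very rich (as the paper later reveals through its realization of the outer automorphism of $S_6$); one can then deduce uniformity from its orbit structure.

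For the remaining rows of Table~\ref{T:LinesComp} the template is identical: in the unanchored cases $n = 3, 5$ all $\binom{s}{2}$ pairs distribute third points across $3^n - s$ non-cap points; in the anchored cases $n = 2, 6$ one excises the $s/2$ anchor pairs and the anchor itself, using denominator $3^n - s - 1$. In each row the arithmetic matches the tabulated entry, and uniformity is again reduced to verification on a single representative cap via affine equivalence of maximal caps in the relevant dimension.
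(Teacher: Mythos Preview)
Your argument is correct. At its decisive step---establishing uniformity of $m(p)$ across the $60$ complement points---you invoke exactly the same reduction the paper uses: affine equivalence of maximal caps (Hill for $n=4$; the cited references for $n\le 6$) collapses the claim to a finite check on one representative cap, which the paper carries out via the Cap Builder program.

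Where you go beyond the paper is the double-counting layer. The paper simply verifies the table entries computationally; your pair-count $\binom{s}{2}$ (minus $s/2$ anchor pairs in the anchored cases) divided by the complement size explains \emph{why} the values $1,2,3,5,10$ arise and shows a priori that each is the forced average. This is a genuine conceptual addition, and your reformulation $m(p)=\tfrac12|C\cap(C-p)|$ is clean. The alternative ``orbit-structure'' route you sketch would require showing that the setwise stabilizer of $C$ acts transitively on the $60$ outside points (which is true, since $|\mathcal T|=2880$ and the point stabilizer has order $48$), but as stated it is only a pointer, not a proof; your primary route through enumeration is what actually closes the argument, just as in the paper.
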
 

\begin{proof}
The result in all cases can be shown by examining one maximal cap, since all maximal caps are affinely equivalent in every dimension where the size is known (\cite{MR695829}, \cite{MR1911459}, \cite{MR2372838}), and incidence structures are preserved by affine transformations.  
We can demonstrate this proposition for $AG(4,3)$ using the maximal cap $C$ pictured in Figure \ref{F:AG43cap}. A program called the Cap Builder \cite{Capbuilder}  allows a user to enter points in $AG(n,3)$ for $n=2, 3,\dots,7$ that include no lines; the program also counts the  number of lines completed using points from in the cap for each point not in the cap.  In Figure \ref{F:Slines}, you see the results for a maximal cap in $AG(4,3)$, where the letter $A$ signifies that 10 lines are completed by that point, the anchor point.
\end{proof}

\begin{figure}[htbp]
\begin{center}
\includegraphics[width=1.6in]{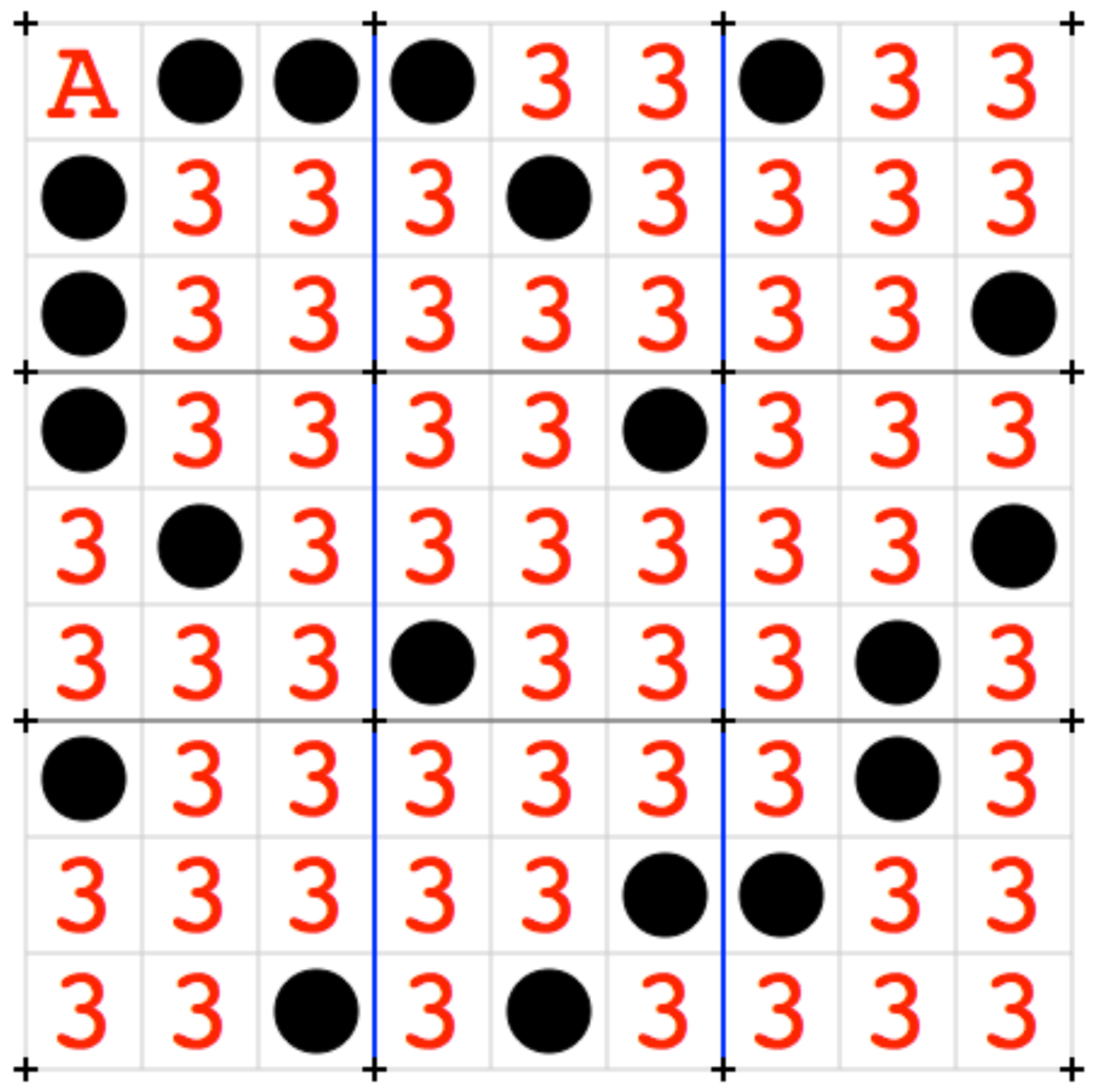}
\caption{The maximal cap $C$ from Figure \ref{F:AG43cap} showing how many lines each point not in $C$ completes with pairs of points in $C$.}
\label{F:Slines}
\end{center}
\end{figure}

We now define demicaps.  These substructures of a maximal cap  have the property that  the complement of a demicap in a maximal cap is also a demicap (see Proposition \ref{P:complement}); they will also have a close connection to the partitions of $AG(4,3)$ discussed in Section~\ref{S:cpsptns}. An example of a demicap is pictured in Figure \ref{F:DC}.

\begin{Def}
A \emph{demicap} $D$ is a cap in $AG(4,3)$ consisting of 5 pairs of points where the line through each pair  passes through a common point,  $a$, called the \emph{anchor point}, so that no four of these lines are co-hyperplanar.
\end{Def}

\begin{figure}[h]
\begin{picture}(100,80)\setlength{\unitlength}{.02cm}		%
\usebox{\grid}
\put (22.5,131){\circle* {8}}	
\put (37.5,131){\circle* {8}}	
\put (7.5,116){\circle* {8}}		
\put (7.5,101){\circle* {8}}		
\put (54.5,131){\circle* {8}}	
\put (101.5,131){\circle* {8}}	
\put (7.5,84){\circle* {8}}		
\put (131.5,69){\circle* {8}}	
\put (7.5,37){\circle* {8}}		
\put (69.5,7){\circle* {8}}		
\end{picture}\vspace{-.1in}
\begin{center}
\caption{A demicap with anchor point $\vec{0}$ in the upper left.}
\label{F:DC}
\end{center}
\end{figure}
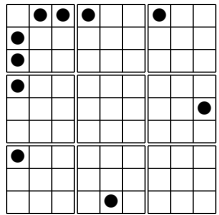

While the definition of a demicap assumes that no four pairs of points are co-hyperplanar, in fact, a demicap cannot have eight points in the same hyperplane even if you don't assume they are pairs of points that make a line with the anchor $a$.  Suppose we have five pairs of points so that each pair completes a line with $a$, and we know that eight of those points lie in the same hyperplane.  Since the eight points must include more than one of the original pairs, and hyperplanes are line-closed, $a$ must be in that hyperplane as well.  That means those eight points must be four of the pairs of points we started with.  (Note that a hyperplane is $AG(3,3)$, and the size of a maximal cap in that space is 9.) 

As the reader might notice from the above, the next definition will make our exposition neater.

\begin{Def}
Let $a$ be a point in $AG(4,3)$.  An \emph{$a$-line} is a pair of points $b,c$ in $AG(4,3)$ so that $\{a,b,c\}$ is a line in $AG(4,3)$.

\end{Def}

Using this definition, a demicap consists of five $a$-lines, where no four of them are co-hyperplanar.  
We will write a demicap as $D=\{d_1, d_1', d_2, d_2', \dots, d_5, d_5'\}$ so that  $d_i,d_i'$ is an $a$-line for $1\leq i\leq5$.

Proposition \ref{P:3lines} gives the number of lines that any point not in a maximal cap completes with pairs of points from that cap.  The next lemma shows that demicaps have a similar structure.

\begin{Lem}\label{L:Complete1Line}
Let $D$ be a cap consisting of five $a$-lines (where $a$ is necessarily not in $D$).  Then $D$ is a demicap with anchor point $a$ if and only if $D\cup \{b\}$ contains at most one line, for $b\not\in D\cup \{a\}$.
\end{Lem}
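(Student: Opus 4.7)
My plan is to translate so that the anchor is $\vec0$, in which case each $a$-line becomes a 1-dimensional subspace $\{d_i,-d_i\}$ and $D$ is described by five nonzero, pairwise non-proportional vectors $d_1,\ldots,d_5\in\mathbb{Z}_3^4$. A point $b\neq\vec0$ completes a line with $p,q\in D$ exactly when $p+q=-b$, so ``$b$ lies on two different lines with $D$'' becomes a coincidence $p_1+q_1=p_2+q_2$ between two disjoint pairs of signed $d_m$'s. Four $a$-lines are co-hyperplanar precisely when the corresponding four $d_m$'s are linearly dependent in $\mathbb{Z}_3^4$, so $D$ is a demicap exactly when every four of $d_1,\ldots,d_5$ are linearly independent.

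A useful preliminary is that since $D$ is already a cap, no three $d_m$'s can be linearly dependent. After scaling, such a relation would take the form $\epsilon_1 d_i+\epsilon_2 d_j+\epsilon_3 d_k=\vec0$ with $\epsilon_\ell\in\{\pm1\}$ (the only alternative forces two $d_m$'s to be proportional), and this would exhibit $\epsilon_1 d_i,\epsilon_2 d_j,\epsilon_3 d_k$ as three collinear points of $D$.

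For the forward direction, assume $D$ is a demicap and suppose for contradiction that some $b\notin D\cup\{a\}$ lies on two distinct lines $\{b,p_1,q_1\}$ and $\{b,p_2,q_2\}$ with all four endpoints in $D$. The two pairs must be disjoint (else the lines coincide), and no single pair can lie inside one $a$-line (else $b=\vec0$). I split into cases by the number of $a$-lines touched by $\{p_1,q_1,p_2,q_2\}$. If only two, a direct sign-chase forces $d_i=\pm d_j$ for the two involved $a$-lines, which is impossible. If three, subtracting the two line equations produces a three-term relation among the $d_m$'s, contradicting the cap preliminary. If four, the same subtraction produces a four-term relation, contradicting the demicap hypothesis.

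For the converse, assume four of the $d_m$'s are linearly dependent, say $d_1,\ldots,d_4$. By the cap preliminary every coefficient is nonzero, so after rescaling we have $\epsilon_1 d_1+\epsilon_2 d_2+\epsilon_3 d_3+\epsilon_4 d_4=\vec0$ with $\epsilon_m\in\{\pm1\}$. Setting $p_1=\epsilon_1 d_1$, $q_1=\epsilon_2 d_2$, $p_2=-\epsilon_3 d_3$, $q_2=-\epsilon_4 d_4$, and $b=-(p_1+q_1)$ produces a point completing two distinct lines with $D$. One checks $b\neq a$ (else $d_1$ and $d_2$ are proportional) and $b\notin D$ (else $\{b,p_1,q_1\}$ would be a line in the cap). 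The main subtlety is bookkeeping about which parts of the argument use the cap hypothesis versus the demicap hypothesis — in particular, that the three-$a$-line case in the forward direction is ruled out by the cap condition alone, independently of any demicap assumption.
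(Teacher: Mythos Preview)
Your argument is correct, and it takes a genuinely different route from the paper's proof. You work entirely in coordinates over $\mathbb{Z}_3$: after translating the anchor to $\vec0$, the demicap condition becomes ``every four of $d_1,\dots,d_5$ are linearly independent,'' and a point $b$ completing two lines becomes a coincidence $p_1+q_1=p_2+q_2$ among signed $d_m$'s. Your three-way case split (on the number of $a$-lines touched) cleanly isolates which hypothesis is used where, and your converse explicitly manufactures $b$ from a four-term $\pm1$ relation.

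The paper instead argues geometrically. For the forward direction it shows that two lines through $b$ force two planes through $a$ and $b$, whose span is a hyperplane containing four $a$-lines. For the converse it uses a pigeonhole count inside the offending hyperplane $H$: the eight points of $D\cap H$ form $28$ pairs; four of these pairs complete to $a$, leaving $24$ third points to fit into the $18$ points of $H$ outside $D\cup\{a\}$, so some point must be hit twice.

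Your approach has the advantage of being constructive in the converse (you actually exhibit $b$), and of making transparent that the three-$a$-line case is excluded by the cap hypothesis alone rather than the demicap hypothesis. The paper's approach has the advantage of being coordinate-free and of giving, via the pigeonhole inequality $24>18$, a sense of how robustly the converse fails. Both are perfectly valid proofs of the lemma.
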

\begin{proof}

Let $a$ be a point and  $D=\{d_1, d_1', d_2, d_2', \dots, d_5, d_5'\}$ be a cap where $d_i,d_i'$ is an $a$-line for $1\leq i\leq5$.

First, suppose that there is a point $b\neq a$ so that $D\cup \{b\}$ contains two lines. We can relabel so that $\{b,d_1,d_2\}$ is a line.  This means that $a,b,d_1, d_1', d_2$, and  $d_2'$ are coplanar.  The second line in $D \cup b$ can't contain either $d_1'$ or $d_2'$, or the same plane would contain five points of $D$, but five points in a plane of $AG(4,3)$ must contain a line.  Relabel so that $\{b,d_3,d_4\}$ is a line.  This now implies that $a,b, d_3, d_3',d_4$ and $d_4'$ are coplanar.  Thus, $b,d_1, d_1', d_2, d_2', d_3, d_3',d_4$ and $d_4'$ are co-hyperplanar, so $D$ is not a demicap.

For the converse,  assume that $D$ is not a demicap. Because of the assumed structure of $D$, 
it must be the case that eight of the points of $D$, and thus also $a$, are in the same hyperplane $H$. The eight points can be paired in 28 ways; each pair has a point that completes a line with the pair.  Four of those pairs give rise to the anchor point, leaving  24 pairs.  However, there are only 27 points in $H$, and 8 of those points are in $D$.  It follows that there must be at least one point $b$ in $H$, excluding the anchor point, such that $\{b\}\cup D$ contains at least two lines. 
\end{proof}

The next corollary follows immediately from the previous proposition.

\begin{Cor}\label{C:demicapcompletecounts}
Let $D$ be a demicap. Then the remaining 71 points in $AG(4,3)$ consist of the anchor point $a$, 40 points completing exactly one line with two points from $D$, and 30 points completing no lines with points from $D$.
\end{Cor}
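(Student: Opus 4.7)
The plan is to combine a direct counting argument with Lemma \ref{L:Complete1Line}. The total number of points in $AG(4,3)$ is $3^4=81$, and $|D|=10$, so the complement $AG(4,3)\setminus D$ has exactly $71$ points, one of which is the anchor $a$. This leaves $70$ points to classify, and by Lemma \ref{L:Complete1Line} each of these points completes either $0$ or $1$ line with a pair of points from $D$. So the entire task reduces to counting precisely how many complete one line.

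Next I would count the pairs of points from $D$ that could give rise to such a third point. The number of unordered pairs from $D$ is $\binom{10}{2}=45$. Exactly $5$ of those pairs are the distinguished $a$-lines $\{d_i,d_i'\}$, whose third point on the corresponding line is $a$ itself. Thus there are $45-5=40$ pairs $\{p,q\}\subset D$ whose third collinear point is some $b\neq a$. Each such pair determines a unique third point in $AG(4,3)$, since two distinct points lie on a unique line.

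The key step is to show that these $40$ third points are all distinct and all lie outside $D\cup\{a\}$. They cannot lie in $D$, since $D$ is a cap and contains no lines. They cannot equal $a$, since we discarded the five $a$-line pairs precisely for this reason. Finally, distinctness follows directly from Lemma \ref{L:Complete1Line}: if two of the $40$ pairs shared the same third point $b$, then $D\cup\{b\}$ would contain two lines, contradicting the demicap hypothesis. Hence the $40$ pairs produce $40$ distinct points in $AG(4,3)\setminus(D\cup\{a\})$, each completing exactly one line with $D$.

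Assembling the count: of the $70$ non-anchor points outside $D$, exactly $40$ complete one line with $D$, and the remaining $70-40=30$ complete no lines with $D$, which is the desired conclusion. The only real content is the Lemma \ref{L:Complete1Line} input guaranteeing distinctness of the $40$ third points; once that is in hand, the statement is a short combinatorial bookkeeping and I do not anticipate any genuine obstacle.
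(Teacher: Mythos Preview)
Your proof is correct and is precisely the elaboration the paper has in mind when it says the corollary ``follows immediately from the previous proposition'': count the $\binom{10}{2}=45$ pairs in $D$, remove the five $a$-line pairs whose third point is $a$, and use Lemma~\ref{L:Complete1Line} to guarantee the remaining $40$ third points are distinct and lie outside $D\cup\{a\}$. There is nothing to add.
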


The next result  show that all demicaps are affinely equivalent. This is our first indication that the  notion could be a useful one.

\begin{Thm}\label{T:DCAE}
All demicaps are affinely equivalent.
\end{Thm}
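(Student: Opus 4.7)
The plan is to normalize every demicap by affine transformations to a single fixed model. First, by translating, I may assume the anchor point is $\vec{0}$, so the demicap is a union of five $\vec{0}$-lines. Since $\{a,b,c\}$ is a line iff $a+b+c=\vec{0}$, each $\vec{0}$-line has the form $\{v,-v\}$ for some nonzero $v\in\mathbb{F}_3^4$. Choose a representative $d_i$ from each pair, so $D=\{\pm d_1,\pm d_2,\pm d_3,\pm d_4,\pm d_5\}$.

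Next I would reinterpret the demicap condition in linear-algebraic terms. A hyperplane of $AG(4,3)$ containing $\vec{0}$ is a 3-dimensional linear subspace of $\mathbb{F}_3^4$. Four of the $\vec{0}$-lines lie in a common hyperplane precisely when the four representatives $d_i$ lie in a 3-dimensional subspace, which in a 4-dimensional ambient space happens iff those four vectors are linearly dependent. So the condition that no four $\vec{0}$-lines are co-hyperplanar becomes: any four of $d_1,\dots,d_5$ are linearly independent.

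In particular $d_1,d_2,d_3,d_4$ form a basis of $\mathbb{F}_3^4$, so I can write $d_5=\alpha_1 d_1+\alpha_2 d_2+\alpha_3 d_3+\alpha_4 d_4$. If some $\alpha_i=0$, then $d_5$ lies in the span of the other three $d_j$'s, contradicting linear independence of that four-subset together with $d_5$; thus each $\alpha_i\in\{1,-1\}\subset\mathbb{F}_3$. Here is the key observation: the choice of representative from each pair $\{d_i,-d_i\}$ is arbitrary, and swapping $d_i$ for $-d_i$ toggles the sign of $\alpha_i$. After flipping signs on the appropriate representatives among $d_1,\dots,d_4$, I may assume $d_5=d_1+d_2+d_3+d_4$.

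Finally, apply the unique $T\in GL(4,3)$ sending $d_i\mapsto e_i$ for $i=1,2,3,4$, where $e_i$ is the $i$-th standard basis vector. Then automatically $T(d_5)=e_1+e_2+e_3+e_4$, and since $T$ is linear it sends each pair $\{\pm d_i\}$ to $\{\pm e_i\}$. Hence $T$ carries $D$ to the standard demicap $\{\pm e_1,\pm e_2,\pm e_3,\pm e_4,\pm(e_1+e_2+e_3+e_4)\}$, and composing with the initial translation realizes the full affine equivalence. Since every demicap is affinely equivalent to this one fixed model, any two demicaps are affinely equivalent to each other. The only non-mechanical step is the sign-flip observation in the third paragraph; everything else is a direct change of basis.
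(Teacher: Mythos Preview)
Your proof is correct and follows essentially the same approach as the paper's: both reduce to anchor $\vec{0}$, observe that any four of the representatives form a basis, deduce that the fifth is a $\pm 1$ combination, and then use sign flips to absorb the ambiguity. The only cosmetic difference is that the paper fixes the first four $\vec{0}$-lines and uses the sixteen diagonal $\pm 1$ linear maps to show all eight possible fifth pairs are equivalent, whereas you absorb those signs directly into the choice of representatives and normalize to a single standard model $\{\pm e_1,\dots,\pm e_4,\pm(e_1+e_2+e_3+e_4)\}$.
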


\begin{proof}

Let $D$ be a demicap with anchor point $\vec0$, so that $D$ consists of five 
$\vec0$-lines. Let $d_1,d_1',d_2,d_2',d_3,d_3',d_4,d_4'$ be the points from  four  of those $\vec0$-lines in $D$. We will first show that there are exactly eight demicaps containing those points.   Now $\{\vec{d}_1,\vec{d}_2,\vec{d}_3,\vec{d}_4\}$ forms a basis for $AG(4,3)$ considered as the vector space $GF(3)^4$, as otherwise those four pairs would be co-hyperplanar.   Then if $d_5,d_5'$ are the last two points in $D$, it must be true that $\vec{d}_5 = \sum_{i=1}^4 c_i\vec{d}_i$ where $c_i = \pm1$. (If, for example, $c_4=0$, then $a,d_1,d_2,d_3,d_5$ are all in the same hyperplane, so there would be four pairs of co-hyperplanar points, violating the fact that $D$ is a demicap). There are exactly 16 choices for the coefficients $c_i$, giving rise to eight pairs of points that  $d_5, d_5'$ can be chosen from; those pairs are pictured in Figure \ref{F:DCcompleted}. Let $S$ be the set consisting of these 16 points.

\begin{figure}[h]
\begin{picture}(140,115)\setlength{\unitlength}{.028cm}		%
\multiput(0,94)(0,15){4}{\line(1,0){45}}		
\multiput(0,94)(15,0){4}{\line(0,1){45}}		
\multiput(47,94)(0,15){4}{\line(1,0){45}}	
\multiput(47,94)(15,0){4}{\line(0,1){45}}	
\multiput(94,94)(0,15){4}{\line(1,0){45}}	
\multiput(94,94)(15,0){4}{\line(0,1){45}}	
\multiput(0,47)(0,15){4}{\line(1,0){45}}		
\multiput(0,47)(15,0){4}{\line(0,1){45}}		
\multiput(47,47)(0,15){4}{\line(1,0){45}}	
\multiput(47,47)(15,0){4}{\line(0,1){45}}	
\multiput(94,47)(0,15){4}{\line(1,0){45}}	
\multiput(94,47)(15,0){4}{\line(0,1){45}}	
\multiput(0,0)(0,15){4}{\line(1,0){45}}		
\multiput(0,0)(15,0){4}{\line(0,1){45}}		
\multiput(47,0)(0,15){4}{\line(1,0){45}}	
\multiput(47,0)(15,0){4}{\line(0,1){45}}	
\multiput(94,0)(0,15){4}{\line(1,0){45}}	
\multiput(94,0)(15,0){4}{\line(0,1){45}}	
%
{\footnotesize 
\put (5,129){$a$}		
\put (18,129){$d_1$}	
\put (32,129){$d_1'$}	
\put (3,114){$d_2$}		
\put (3,99){$d_2'$}		
\put (50,129){$d_3$}	
\put (97,129){$d_3'$}	
\put (3,82){$d_4$}		
\put (3,35){$d_4'$}		
\put (67.5,67){$s $}		
\put (82.5,67){$t $}		
\put (67.5,52){$u $}		
\put (82.5,52){$v $}		
\put (113.5,67){$w $}	
\put (129.5,67){$x $}	
\put (114.5,52){$y $}
\put (129.5,52){$z $}	
\put (65,20){$z' $}			
\put (81,20){$y' $}		
\put (65,5){$x' $}		
\put (80,5){$w' $}		
\put (112,20){$v' $}	
\put (127,20){$u' $}	
\put (112,5){$t' $}		
\put (127,5){$s' $}	}	
\end{picture}\vspace{-.1in}
\begin{center}
\caption{Four non-co-hyperplanar pairs of points $d_1,d_1', d_2,d_2' ,d_3,d_3',d_4,d_4'$ making four lines through an anchor point $a$; any of the labeled eight pairs of points could be added to make a demicap.}
\label{F:DCcompleted}
\end{center}
\end{figure}

We will now show the eight demicaps found above are affinely equivalent. Consider the linear transformations of $AG(4,3)$ that map each of the basis elements  $\{\vec{d}_1,\vec{d}_2,\vec{d}_3,\vec{d}_4\}$ to either itself or to its negative; there are 16 such transformations. If $\vec{s}=\sum_{i=1}^4 \vec{d}_i$, then each of the points  in $S$ 
is the image of $s$ under one of those transformations.  Thus, all eight of the possible demicaps containing $d_1,d_1', d_2,d_2' ,d_3,d_3',d_4,d_4'$ are affinely equivalent.

Finally, any other demicap $D'$ with anchor point $a$ consists of five $a$-lines $f_i,f_i', 1\leq i\leq5$.  There is a unique affine transformation mapping $a'$ to $\vec0$ and $f_i$ to $d_i$, $1\leq i\leq5$.
That transformation then maps $\{f_5,f_5'\}$ to one of the pairs of points in $S$, so $D'$ is affinely equivalent to a demicap which is affinely equivalent to $D$.  Thus, all demicaps are affinely equivalent.
\end{proof}

The next corollary follows from the proof of Theorem \ref{T:DCAE}.

\begin{Cor}
Let $a$ be a point in $AG(4,3)$.  Given a set of four non-co-hyperplanar $a$-lines, there are exactly eight demicaps that contain them.
\end{Cor}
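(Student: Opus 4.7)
The plan is to essentially unpack what the proof of Theorem \ref{T:DCAE} already established. After translating so that $a = \vec{0}$, write the four given non-co-hyperplanar $a$-lines as $\{\vec{d}_i, -\vec{d}_i\}$ for $i = 1,2,3,4$. As observed in the proof of \ref{T:DCAE}, the non-co-hyperplanarity hypothesis forces $\{\vec{d}_1, \vec{d}_2, \vec{d}_3, \vec{d}_4\}$ to be a basis of $GF(3)^4$, since otherwise three of them would span a $2$-dimensional subspace, and together with $\vec{0}$ the corresponding three $a$-lines would already fit in a $2$-flat, so any fourth $a$-line taken together with the other three would lie in a hyperplane.

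Next I would establish the upper bound of $8$. Any fifth $a$-line has the form $\{\vec{d}_5, -\vec{d}_5\}$ with $\vec{d}_5 = \sum_{i=1}^{4} c_i \vec{d}_i$ for unique $c_i \in GF(3)$. If some $c_j = 0$ then $\vec{d}_5$ lies in the hyperplane spanned by the three basis vectors $\vec{d}_i$ with $i \ne j$, so the four $a$-lines indexed by $\{1,2,3,4,5\} \setminus \{j\}$ are co-hyperplanar, violating the demicap condition. Hence each $c_i \in \{-1, +1\}$, giving $2^4 = 16$ sign vectors. The pattern $(c_1, c_2, c_3, c_4)$ and its negative produce $\vec{d}_5$ and $-\vec{d}_5$, i.e. the same $a$-line, so we have at most $16/2 = 8$ choices.

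Finally, I would verify that each of the $8$ candidates really is a demicap. Non-co-hyperplanarity of any four of the five resulting $a$-lines follows because each $c_i$ is nonzero: removing the $j$-th $a$-line for $j \le 4$ still leaves a spanning set, since $\vec{d}_j = c_j^{-1}\bigl(\vec{d}_5 - \sum_{i \ne j} c_i \vec{d}_i\bigr)$, and removing the $5$-th $a$-line leaves the basis itself. For the cap property, I would check that no three of the ten points sum to $\vec{0}$: two points from the same $a$-line sum to $\vec{0}$, so a third point cannot equal $\vec{0}$; and a putative collinear triple $\epsilon_1\vec{d}_{i_1} + \epsilon_2\vec{d}_{i_2} + \epsilon_3\vec{d}_{i_3} = \vec{0}$ from three distinct $a$-lines is impossible — either all indices lie in $\{1,\dots,4\}$, contradicting basis independence, or one index is $5$, in which case expanding $\vec{d}_5$ in the basis forces some $c_k = 0$, a contradiction.

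The main obstacle is really just the bookkeeping in the last step; every other piece is a direct quotation of the computation already inside the proof of \ref{T:DCAE}. Once the cap property and the four-line non-co-hyperplanarity are both confirmed for the generic sign choice, the count of exactly $8$ demicaps is immediate.
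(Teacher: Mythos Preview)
Your approach is essentially the paper's own: the corollary is stated there as an immediate consequence of the proof of Theorem~\ref{T:DCAE}, and you are simply making that extraction explicit, with the added bonus of directly verifying the cap and non-co-hyperplanarity conditions for each of the eight candidates (the paper instead relies on affine equivalence of the eight candidates to a known demicap).

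One small correction: your justification that $\{\vec{d}_1,\vec{d}_2,\vec{d}_3,\vec{d}_4\}$ is a basis is garbled. Linear dependence of four vectors does \emph{not} force three of them to span only a $2$-dimensional subspace; for instance $(1,0,0,0),(0,1,0,0),(0,0,1,0),(1,1,1,0)$ are dependent but every three of them span a $3$-space. The correct (and simpler) argument is the one the paper uses: if the four vectors were dependent they would lie in a common $3$-dimensional linear subspace, hence all four $a$-lines would be co-hyperplanar, contrary to hypothesis. With that fix, the rest of your argument goes through cleanly.
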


The next proposition counts the number of demicaps with a given anchor. We also count how many demicaps a given maximal cap contains and how many maximal caps a given demicap is in. These counts will provide a link to the maximal caps in a 1-completable pair with $C$ in Section \ref{S:results}.

\begin{Prop}\label{P:dcinfo} In $AG(4,3)$:

\begin{enumerate}
\item There are  101,088 distinct demicaps with a given anchor point $a$. 
\item Let $C$ be a maximal cap in $AG(4,3)$.  Then $C$ contains exactly 72 demicaps.
\item Every demicap is contained in exactly 6 maximal caps.
\end{enumerate}
\end{Prop}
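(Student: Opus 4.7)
The plan is to prove (1) by a direct count using the $8$-extension from the proof of Theorem~\ref{T:DCAE}, (2) by an incidence double count inside $C$, and (3) by combining (1), (2), and the known count of maximal caps with a given anchor.

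For (1), I would count ordered $5$-tuples of $a$-lines forming a demicap and divide by $5!$. The proof of Theorem~\ref{T:DCAE} shows that every ordered $4$-tuple of non-co-hyperplanar $a$-lines through $a$ extends to a demicap by exactly one of $8$ admissible $5$th $a$-lines, and such an ordered $4$-tuple is determined by an ordered basis of $\mathbb{Z}_3^4$ once a direction vector is picked on each $a$-line. Thus the number of ordered $5$-tuples is $[(3^4-1)(3^4-3)(3^4-9)(3^4-27)/2^{4}]\cdot 8$, and dividing by $5!$ yields $101{,}088$.

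For (2), fix a maximal cap $C$ with its $10$ $a$-lines. Because $C$ is a cap, no three of its direction vectors are linearly dependent: a relation $v_k=\pm v_i\pm v_j$ would put three collinear points in $C$. Hence no three $a$-lines of $C$ lie in a common plane through $a$, and every independent triple spans a unique hyperplane through $a$. Let $n_k$ be the number of hyperplanes through $a$ containing exactly $k$ $a$-lines of $C$. Double-counting incidences between the $a$-lines of $C$ and the hyperplanes through $a$, using that a single $a$-line lies in $13$ hyperplanes through $a$, a pair in $4$, and an independent triple in exactly $1$, yields
\[\sum_k n_k=40,\ \sum_k k\,n_k=130,\ \sum_k \binom{k}{2}n_k=180,\ \sum_k \binom{k}{3}n_k=120.\]
Since $C\cap H$ is an even-sized cap in $AG(3,3)$ (so $|C\cap H|\le 8$), $n_k=0$ for $k\ge 5$; combined with $n_k\ge 0$, the system forces $(n_0,n_1,n_2,n_3,n_4)=(0,10,0,0,30)$. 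No $5$-subset of $a$-lines in $C$ contains two co-hyperplanar $4$-subsets (three shared $a$-lines would span a common hyperplane containing all $5$, violating $k\le 4$), so the number of demicaps in $C$ is $\binom{10}{5}-30\cdot(10-4)=252-180=72$.

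For (3), double-counting pairs $(D,C)$ with $D\subset C$ and common anchor $a$ gives $101{,}088\cdot k = N_{MC}\cdot 72$, where $k$ is the number of maximal caps containing a fixed demicap (constant by Theorem~\ref{T:DCAE}) and $N_{MC}$ is the number of maximal caps with anchor $a$. Using $N_{MC}=8{,}424$ (available from \cite{MR3262358} or via the stabilizer order of a maximal cap in $\mathrm{Aff}(4,3)$) gives $k=6$. The main obstacle is pinning down $N_{MC}$; for a self-contained alternative, I would use Theorem~\ref{T:DCAE} to normalize $D$ to direction vectors $v_1,v_2,v_3,v_4,v_1+v_2+v_3+v_4$, use Corollary~\ref{C:demicapcompletecounts} to isolate the $15$ ``$0$-line'' $a$-lines of $D$, and verify by case analysis (or with the Cap Builder) that exactly $6$ choices of five of these form a demicap whose union with $D$ is a maximal cap.
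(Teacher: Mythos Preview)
Your arguments for (1) and (3) are correct and coincide with the paper's: for (1) the paper also counts ordered independent $4$-tuples of $a$-lines ($40\cdot 39\cdot 36\cdot 27$, which equals your $(3^4-1)(3^4-3)(3^4-9)(3^4-27)/2^4$), multiplies by $8$ from Theorem~\ref{T:DCAE}, and divides by $5!$; for (3) the paper uses the same incidence count with the number $8424$ of maximal caps from \cite{MR3262358}.

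For (2) your route is genuinely different from the paper's. The paper builds a demicap inside $C$ one $a$-line at a time, invoking the known hyperplane-triple distribution $\{8,6,6\}$ (so that the hyperplane through any three chosen $a$-lines contains exactly one further $a$-line of $C$) to get $10\cdot 9\cdot 8\cdot 6\cdot 2/5!=72$. You instead determine the entire incidence profile $(n_0,\dots,n_4)=(0,10,0,0,30)$ of hyperplanes through $a$ against the ten $a$-lines of $C$ by four double counts, using only that $C$ is a cap (hence no three $a$-lines coplanar) and that a cap in $AG(3,3)$ has at most $9$ points (hence $k\le 4$); then a clean inclusion--exclusion on $5$-subsets gives $\binom{10}{5}-30\cdot 6=72$. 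Your approach is more self-contained (it derives the $\{8,6,6\}$ behaviour rather than importing it) and yields the exact hyperplane distribution as a byproduct; the paper's approach is more constructive but implicitly needs that the three ``new'' forbidden $a$-lines at the final step are distinct from one another and from the earlier forbidden one, which your method sidesteps entirely. One small wording fix: in your fallback for (3), the claim you would actually verify is that exactly six $5$-subsets of the fifteen ``$0$-line'' $a$-lines give a cap when united with $D$ (the fact that those five lines then form a demicap is Proposition~\ref{P:complement}, which comes later).
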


\begin{proof}
(1) 
We count the number of ways to choose five $a$-lines so that no four are co-hyperplanar.
There are 40 lines through $a$, so we can choose the first two $a$-lines to be the points from any two of those lines.  Those four  points determine a plane which contains two other $a$-lines, and we cannot add any of those $a$-lines to our demicap. This means that there are 36 $a$-lines we may choose for the third $a$-line. 
The three $a$-lines chosen to this point determine a hyperplane which contains 13 $a$-lines, so we cannot choose any of those for our fourth $a$-line.  Thus, we may choose any of the 27 remaining $a$-lines for the fourth $a$-line in the demicap. The argument from the proof of Theorem \ref{T:DCAE} shows that we could choose any of eight $a$-lines  to complete the demicap. Since the order in which we chose the $a$-lines  doesn't matter, this gives $\frac{40\cdot 39\cdot36\cdot27\cdot8}{5!}=
101,088$ demicaps.

(2)
Let $C$ be an arbitrary maximal cap (so $C$ consists of 10 $a$-lines). To count the number of demicaps in  $C$, we count the number of ways to choose five $a$-lines from the 10 so that no four $a$-lines are co-hyperplanar.  We may choose any three of the 10 pairs at the start. 

When we partition $AG(4, 3)$ into three disjoint and parallel hyperplanes $H_1$, $H_2$ and $H_3$, the intersection of those hyperplanes with $C$ partitions the cap. The only possible distribution of points in that partition is $\{9,9,2\}$ or $\{8,6,6\}$. (This result is well-known.  Davis and Maclagan \cite{MR2005098} call it a hyperplane triple, and Potechin \cite{MR2372838}  calls it a division.)  If we assume that the anchor point $a$ is in $H_1$, then  the number of points from $C$ in $H_2$ and $H_3$ must be equal, since any line in $AG(4,3)$ is either contained in one hyperplane or it has one point from each of three parallel hyperplanes. It follows that $H_1$ either contains two or eight points of $C$.

Now, consider the partition of $AG(4,3)$ into three parallel hyperplanes $H_1'$, $H_2'$, $H_3'$, where $H_1'$ is determined by the three pairs of points initially chosen (which forces $H_2'$ and $H_3'$).  There must be a fourth pair of points from $C$ in $H_1'$, by the previous paragraph, and we may not choose that pair to be in the demicap.  Thus, we may freely choose one of the six remaining pairs not in this hyperplane.  

How many choices do we have for the last pair of points from $C$? If we consider the hyperplane containing two of the three initially chosen pairs of points plus the fourth chosen pair, that hyperplane will contain exactly one more pair of points (since it must contain eight points of $C$), which we cannot add to our demicap.  This excludes three of the five remaining pairs.  Either of the two remaining pairs can indeed make a demicap with the first four chosen.  Since the order we chose the pairs doesn't matter, the total number of demicaps in a given maximal cap is $\frac{10\cdot9\cdot8\cdot6\cdot2}{5!}=72$ demicaps.  

(3) 
There are 8424 maximal caps with anchor point $a$ (a proof of this can be found in \cite{MR3262358}); from (1), there are 101,088 demicaps with the same anchor point.  From (2), each maximal cap contains 72 demicaps.  An incidence count then shows that each demicap is contained in 6 maximal caps.
\end{proof}

An alternate proof of (1) above uses the Orbit-Stabilizer Theorem.  If we assume the anchor point is $\vec0$, then all affine transformations fixing $\vec0$ are actually linear transformations. Thus, the full group acting on the set of demicaps with anchor point $\vec0$ is $GL(4,3)$, of size 24,261,120.  It's an illuminating exercise to show that the stabilizer of a given demicap is $S_5\times \mathbb{Z}_2$, of size 240.  

\medskip
Our next result shows that the 72 demicaps come in 36 pairs whose union is $C$. In the next section, this will provide a direct connection between demicaps in $C$ and the maximal caps $C'$ in a 1-completable pair with $C$.

\begin{Prop}\label{P:complement}
Let $C$ be a maximal cap in $AG(4,3)$, and let $D$ be a demicap with $D\subset C$.  Then $C \backslash D$ is also a demicap.  Thus, every maximal cap can be written as the disjoint union of two demicaps in 36 ways.
\end{Prop}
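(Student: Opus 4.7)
The plan is to prove the central claim that $C \setminus D$ is itself a demicap; the ``36 ways'' count then follows immediately, since $D \mapsto C \setminus D$ is a fixed-point-free involution on the $72$ demicaps in $C$ (Proposition \ref{P:dcinfo}(2)), yielding $72/2 = 36$ complementary pairs. Before attacking the main claim I would check that $D$ shares its anchor with $C$: otherwise the anchor $a'$ of $D$ either lies in $C$ (in which case any defining pair of $D$ makes $a'$ one of three collinear points of the cap $C$, absurd) or lies in neither $C$ nor $\{a\}$ (in which case Proposition \ref{P:3lines} forces $a'$ to complete exactly three lines with $C$, yet the five pairs defining $D$ exhibit five). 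Hence $D$ is $5$ of the $10$ $a$-lines of $C$, and $C \setminus D$ consists automatically of $5$ more $a$-lines; what remains is to show that no four of these lie in a common hyperplane.

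I would argue by contradiction: suppose four $a$-lines of $C \setminus D$ lie in a common hyperplane $H$ (necessarily through $a$, as hyperplanes are line-closed). Because $\{b\} \cup D \subset C$ is a cap for every $b \in C \setminus D$, each such $b$ completes no line with $D$; Corollary \ref{C:demicapcompletecounts} then says the $30$ such points pair up into $15$ $a$-lines, and $C \setminus D$ is chosen from these $15$. Using affine equivalence of demicaps (Theorem \ref{T:DCAE}), I translate so that $a = \vec{0}$ and pick a basis $d_1,\dots,d_4$ of $\mathbb{Z}_3^4$ with $D$'s five directions equal to $d_1,d_2,d_3,d_4$ and $d_5 = d_1+d_2+d_3+d_4$. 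A direct calculation identifies the $15$ zero-completing $a$-lines as twelve of ``weight $3$'' type (directions $\pm d_i \pm d_j \pm d_k$ on a 3-subset of $\{1,2,3,4\}$ with non-unanimous signs) together with three of ``weight $4$'' type (directions $\pm d_1 \pm d_2 \pm d_3 \pm d_4$ with a $2$-$2$ sign split).

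Writing $H = \ker\phi$, the condition that $H$ contains no $a$-line of $D$ becomes $\phi(d_i) \neq 0$ for $i = 1,\dots,5$; using $\phi(d_5) = \phi(d_1)+\phi(d_2)+\phi(d_3)+\phi(d_4)$ in $\mathbb{Z}_3$, the tuple $(\phi(d_1),\dots,\phi(d_4)) \in \{\pm 1\}^4$ cannot be $2$-$2$ split, leaving up to sign exactly five ``$D$-empty'' hyperplanes: one of type $(1,1,1,1)$ and four of type ``$(3,1)$-signed.'' Evaluating $\phi$ on the $15$ zero-completing directions gives the crucial count: the $(1,1,1,1)$-hyperplane contains all three weight-$4$ directions and no weight-$3$ ones, while each $(3,1)$-hyperplane contains three weight-$3$ directions (one from each 3-subset containing the minority-sign index) and no weight-$4$ ones. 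Hence every $D$-empty hyperplane contains at most three of the $15$ zero-completing $a$-lines, so $C$ has at most three $a$-lines in any such hyperplane. But the $\{2,9,9\}/\{8,6,6\}$ hyperplane-triple dichotomy forces $C$ to meet every hyperplane through $a$ in either $1$ or $4$ $a$-lines, so $C$ meets $H$ in exactly one --- contradicting the assumed four. The main obstacle is the case-by-case $\phi$-evaluation yielding this ``three per $D$-empty hyperplane'' count; reducing to the standard model via Theorem \ref{T:DCAE} is what makes the bookkeeping tractable.
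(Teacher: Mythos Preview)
Your argument is correct, though it follows a genuinely different path from the paper's.

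The paper argues the contrapositive: it fixes four co-hyperplanar $a$-lines in $C$, lying in a hyperplane $H_1$ with parallels $H_2,H_3$, and shows by a pigeonhole count that the remaining six $a$-lines of $C$ cannot contain a demicap. A demicap among those twelve points would place five points in each of $H_2$ and $H_3$; the $25$ cross-pairs complete to points of $H_1$, five of which are the anchor, forcing the other $20$ into only $18$ available slots, so some point of $H_1$ completes two lines with the putative demicap --- contradicting Lemma~\ref{L:Complete1Line}. This is short and entirely conceptual.

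Your approach instead normalises $D$ via Theorem~\ref{T:DCAE}, explicitly lists the $15$ zero-completing $a$-lines, and then classifies the $D$-empty hyperplanes and counts how many of those $15$ each contains. This is more computational, but it has the virtue of exposing the combinatorial structure of the $30$ points that see no line of $D$ --- information the paper never makes explicit. One small expositional gap: you analyse only $D$-empty hyperplanes but never say why the hypothetical $H$ containing four $a$-lines of $C\setminus D$ must itself be $D$-empty. The fix is immediate (those four are already $a$-lines of $C$, so the $1$-or-$4$ dichotomy forces $H\cap C$ to be exactly those four, leaving none from $D$), and you clearly have this in mind since you invoke the dichotomy in the next breath; just state it before launching the $\phi$-computation.
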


\begin{proof}
We will show that the complement in $C$ of four co-hyperplanar $a$-lines cannot contain a demicap. Once we have done so, then if  $D$ is a demicap contained in $C$, then its complement $C\backslash D$ cannot contain four co-hyperplanar $a$-lines.  Since the complement must consist of five $a$-lines, it must necessarily be a demicap.    

The reader can follow what we do here by considering the maximal cap in Figure~\ref{F:AG43cap} and the four $a$-lines in the top hyperplane, which we call $H_1$.
These eight points and the anchor point are nine of the 27 points in  $H_1$.  Consider $H_2$ and $H_3$, the two hyperplanes parallel to $H_1$.  The complement in $C$ of those eight points consists of six  $a$-lines; thus there must be six points in $H_2$, and six in $H_3$, as hyperplanes are line-closed and $a \in H_1$.  If there were a demicap contained in those 12 points, it would have to consist of five points in $H_2$, and five points in $H_3$. Consider all pairings of these 10  points with one from $H_2$ and one from $H_3$.  There are $5\cdot5 = 25$ such pairings, and each gives rise to a point in $H_1$.  Five of those are the anchor point, so the remaining 20 must be points other than $a$ and the eight original points, which leaves 18 points.  Thus, there must be a point in $H_1$ that completes two lines from the potential demicap, violating Lemma \ref{L:Complete1Line}.  Thus, the 12 points that are the complement of four $a$-lines which are co-hyperplanar cannot contain a demicap, as desired.

The last sentence of  the proposition follows from Proposition \ref{P:dcinfo} (2).
\end{proof}

We are now ready to make the connection between demicaps and pairs of maximal caps $\{C,C'\}$ that are in exactly one partition.


\section{Demicaps of $C$ and maximal caps that are in one partition with $C$}\label{S:results}

In $AG(4,3)$, any pair of disjoint maximal caps is in either one, two or six partitions into four maximal caps along with their common anchor point. Given a partition into four maximal caps plus their common anchor point, either there are two 1-completable pairs or there are two 2-completable pairs; in both cases, all four of the other pairs are 6-completable pairs.  All this can be found in \cite{MR3262358}.  Our goal in this section is to use one pair of demicaps whose union is $C$ to find one of the 36 maximal caps that makes a 1-completable pair with $C$, and then to find the unique partition of $AG(4,3)$ containing that pair. 

\newsavebox{\gridsm}
\savebox{\gridsm}(0,59){ \setlength{\unitlength}{.015cm}
\multiput(0,94)(0,15){4}{\line(1,0){45}}		
\multiput(0,94)(15,0){4}{\line(0,1){45}}		
\multiput(47,94)(0,15){4}{\line(1,0){45}}	
\multiput(47,94)(15,0){4}{\line(0,1){45}}	
\multiput(94,94)(0,15){4}{\line(1,0){45}}	
\multiput(94,94)(15,0){4}{\line(0,1){45}}	
\multiput(0,47)(0,15){4}{\line(1,0){45}}		
\multiput(0,47)(15,0){4}{\line(0,1){45}}		
\multiput(47,47)(0,15){4}{\line(1,0){45}}	
\multiput(47,47)(15,0){4}{\line(0,1){45}}	
\multiput(94,47)(0,15){4}{\line(1,0){45}}	
\multiput(94,47)(15,0){4}{\line(0,1){45}}	
\multiput(0,0)(0,15){4}{\line(1,0){45}}		
\multiput(0,0)(15,0){4}{\line(0,1){45}}		
\multiput(47,0)(0,15){4}{\line(1,0){45}}	
\multiput(47,0)(15,0){4}{\line(0,1){45}}	
\multiput(94,0)(0,15){4}{\line(1,0){45}}	
\multiput(94,0)(15,0){4}{\line(0,1){45}}	
}


\subsection{One pair of demicaps whose union is $C$ determines a maximal cap that is in exactly one partition with $C$}

Every  maximal cap $C$ contains 72 demicaps. Since the complement of a demicap in $C$ is also a demicap, there are  36 pairs of disjoint demicaps whose union is $C$.  Those pairs provide a direct link to the 36 maximal caps disjoint from $C$ which make a 1-completable pair with $C$.  

\medskip
Let $C$ be a maximal cap, and $D$ a demicap contained in $C$.  Let $D'$ be the complement of $D$ in $C$.  Figure \ref{F:Cunion} shows such a union for the maximal cap pictured in Figure~\ref{F:AG43cap} and the demicap pictured in Figure \ref{F:DC}.

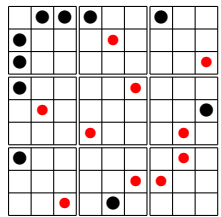
\begin{figure}[h]
\begin{picture}(94,80)\setlength{\unitlength}{.02cm}		%
\usebox{\grid}
\put (22.5,131){\circle* {8}}	
\put (37.5,131){\circle* {8}}	
\put (7.5,116){\circle* {8}}		
\put (7.5,101){\circle* {8}}		
\put (54.5,131){\circle* {8}}	
\put (101.5,131){\circle* {8}}	
\put (7.5,84){\circle* {8}}		
\put (131.5,69){\circle* {8}}	
\put (7.5,37){\circle* {8}}		
\put (69.5,7){\circle* {8}}		
\put (69.5,116){\textcolor{red}{\circle* {7}}}	
\put (131.5,101){\textcolor{red}{\circle* {7}}}	
\put (22.5,69){\textcolor{red}{\circle* {7}}}	
\put (84.5,84){\textcolor{red}{\circle* {7}}}	
\put (54.5,54){\textcolor{red}{\circle* {7}}}	
\put (116.5,54){\textcolor{red}{\circle* {7}}}	
\put (37.5,7){\textcolor{red}{\circle* {7}}}		
\put (84.5,22){\textcolor{red}{\circle* {7}}}	
\put (116.5,37){\textcolor{red}{\circle* {7}}}	
\put (101.5,22){\textcolor{red}{\circle* {7}}}	
\end{picture}
\begin{center}
\caption{The maximal cap $C$ from Figure~\ref{F:AG43cap} as the union of two demicaps $D$ (from Figure \ref{F:DC}) and $D'$.}
\label{F:Cunion}
\end{center}
\end{figure}

The two demicaps $D$ and $D'$ each have 40 points that complete one line with a pair of points in the demicap, by Corollary \ref{C:demicapcompletecounts}. Output from the Cap Builder \cite{Capbuilder}, shown in Figure \ref{F:DD'1s}, indicates the points in $AG(4,3)$ that complete one line with two points in $D$ on the left and $D'$ in the middle.  The anchor point $a$ is the point that completes 5 lines in each demicap.

\begin{figure}[htbp]
\includegraphics[width=1.12in]{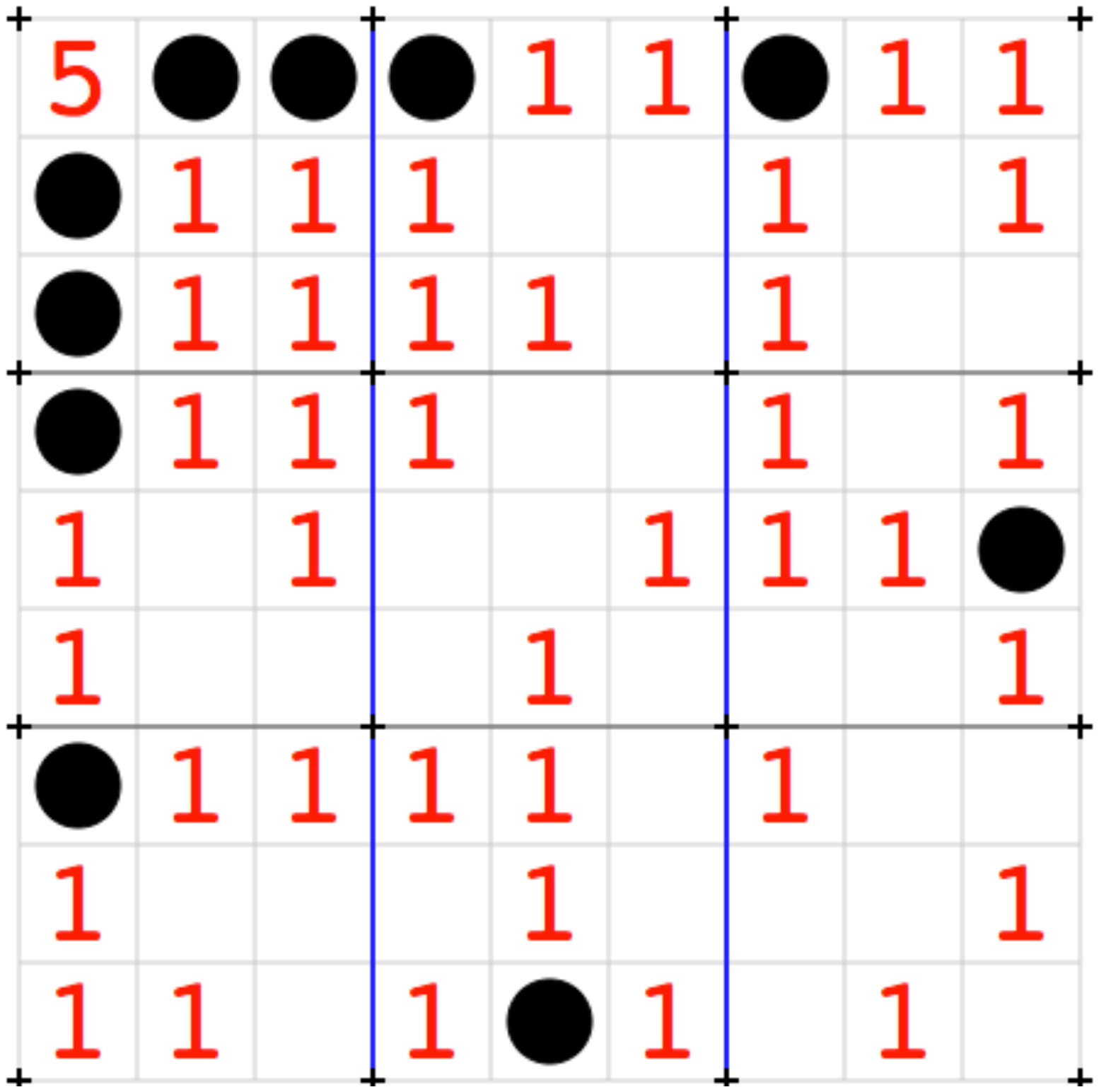} \hspace{.15in}
\includegraphics[width=1.12in]{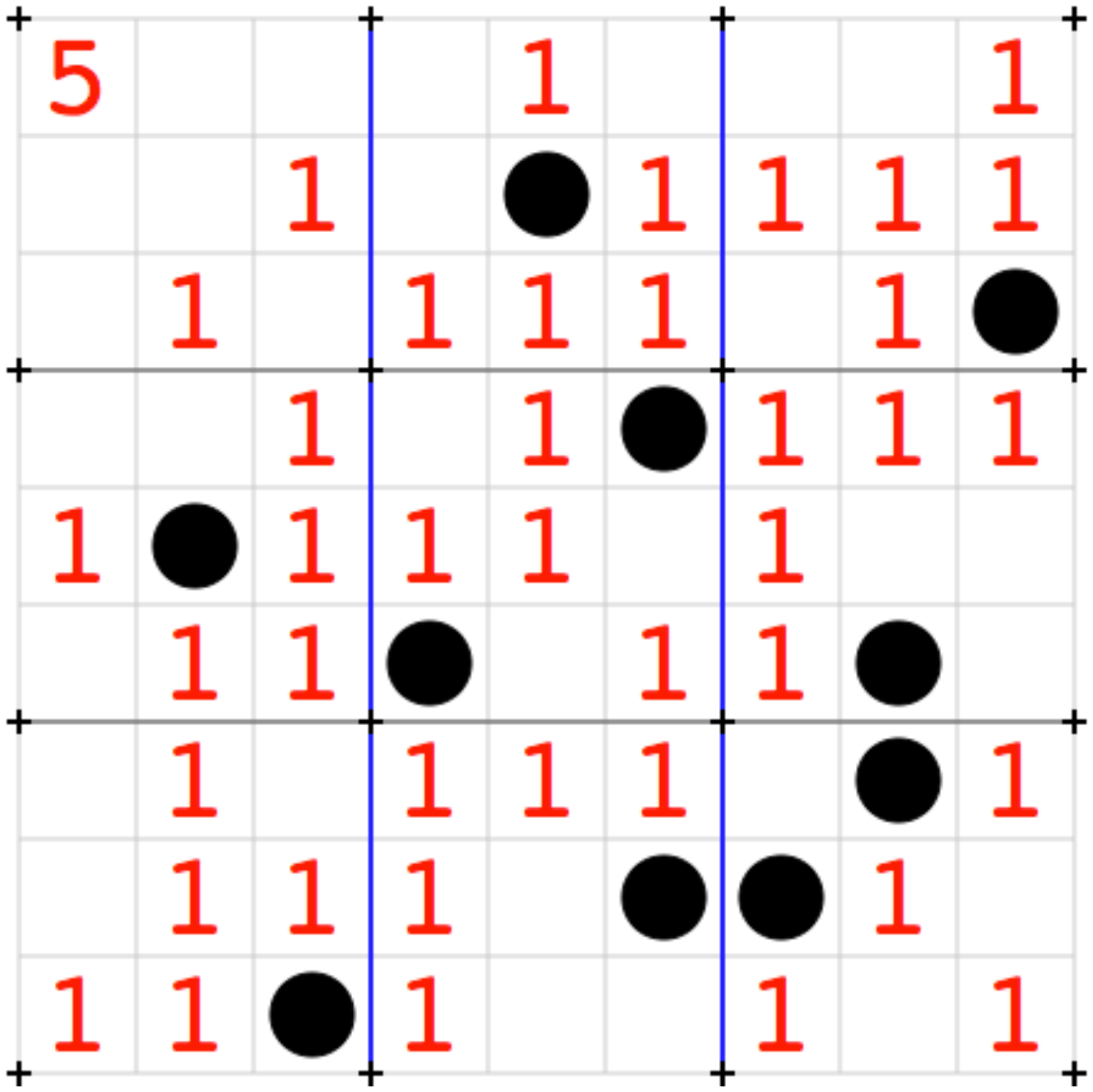} \hspace{.15in}
\begin{picture}(96,84)\setlength{\unitlength}{.02cm}		%
\usebox{\grid}
\put (37.5,116){\circle* {8}}	
\put (22.5,101){\circle* {8}}	
\put (69.5,131){\circle* {8}}	
\put (54.5,101){\circle* {8}}	
\put (69.5,101){\circle* {8}}	
\put (131.5,131){\circle* {8}}	
\put (101.5,116){\circle* {8}}	
\put (131.5,116){\circle* {8}}	
\put (37.5,84){\circle* {8}}		
\put (7.5,69){\circle* {8}}		
\put (37.5,69){\circle* {8}}		
\put (101.5,84){\circle* {8}}	
\put (131.5,84){\circle* {8}}	
\put (101.5,69){\circle* {8}}	
\put (22.5,37){\circle* {8}}		
\put (7.5,7){\circle* {8}}		
\put (22.5,7){\circle* {8}}		
\put (54.5,37){\circle* {8}}		
\put (69.5,37){\circle* {8}}		
\put (54.5,7){\circle* {8}}		
\end{picture}
\caption{The points in $AG(4,3)$ that complete one line with points from demicaps $D$  (left) and $D'$ (center) are indicated by 1's. The points that complete one line with a pair of points from both $D$  and $D'$ are shown on the right; those points are a maximal cap $C'$. $C$ and $C'$ are a 1-completable pair.}
\label{F:DD'1s}
\end{figure}

Comparing the results on the left and in the center of Figure \ref{F:DD'1s}, one can see that there are exactly 20 points  that complete a line with a pair of points from $D$ and also complete a line with a pair of points from $D'$ (the anchor point completes five lines from each).  Those 20 points are shown on the right in Figure \ref{F:DD'1s}. The 20 points shown on the right in Figure \ref{F:DD'1s} do indeed form a maximal cap, as the reader can verify.  Furthermore, the authors have verified that the original maximal cap $C$ and this new maximal cap $C'$ are in exactly one partition.

Finally, the authors have also verified that each of the 36 pairs of disjoint demicaps gives rise to a different maximal cap using this procedure, so the 36 pairs of disjoint demicaps whose union is $C$ correspond uniquely to the 36 maximal caps that make a 1-completable pair with $C$.

We summarize these results in a theorem. 

\begin{Thm}\label{T:demicapcorresp}
Let $C$ be a maximal cap. There are 36 pairs of disjoint demicaps whose union is $C$ and there are 36 maximal caps that make a 1-completable pair with $C$. A pair of demicaps $D$ and $D'$ where $C=D\cup D'$ corresponds uniquely to a maximal cap $C'$ that is in exactly one partition with $C$: $C'$ consists of all the points in $AG(4,3)$ that complete one line with a pair of points in $D$ and  one line with a pair of points in $D'$. 
\end{Thm}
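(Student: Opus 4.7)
The plan is to prove the theorem in three stages. Prop.~\ref{P:complement} supplies 36 demicap partitions $C = D \sqcup D'$, and Prop.~\ref{P:REU}(1) supplies 36 one-completable partners of $C$, so it suffices to show (I) the set $C'$ defined in the theorem is a maximal cap disjoint from $C$, (II) $\{C,C'\}$ is a one-completable pair, and (III) the assignment $(D,D')\mapsto C'$ is injective; injectivity plus matching cardinalities will force a bijection.

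For (I), let $S_D$ and $S_{D'}$ denote the 40-point line-completion sets of Cor.~\ref{C:demicapcompletecounts}. Because $C = D\sqcup D'$ is a cap, no point of $D'$ can complete a line with two points of $D$ (three collinear cap points), and the anchor $a$ is excluded by definition, so $S_D, S_{D'} \subset AG(4,3)\setminus(C\cup\{a\})$, a 60-point set. Inclusion-exclusion then yields $|C'|=|S_D\cap S_{D'}|\geq 20$. Moreover, since both $D$ and $D'$ are unions of $a$-lines, the map $p\mapsto -p$ (centered at the anchor) carries completing pairs to completing pairs, so $C'$ is closed under negation and is itself a union of $a$-lines. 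The critical step is showing that $C'$ contains no line: if $x+y+z=0$ with $x,y,z\in C'$, take the unique $D$-pairs $\{u_1,u_2\}, \{v_1,v_2\}, \{w_1,w_2\}$ completing lines with $x,y,z$, and sum the three line equations to get $u_1+u_2+v_1+v_2+w_1+w_2=0$, a six-term zero relation among (possibly repeated) points of $D$. Using the parametrization from the proof of Thm.~\ref{T:DCAE} in which $D=\{\pm d_i\}_{i=1}^5$ with $d_5 = d_1+d_2+d_3+d_4$, I would enumerate the configurations of three non-$a$-line pairs of $D$ that sum to zero and argue that each forces four of the $a$-lines of $D$ to lie in a common hyperplane, contradicting the demicap condition; the same argument applied to $D'$ completes the ruling-out. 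Once $C'$ is known to be a cap, $|C'|\leq 20$ follows automatically, so $|C'|=20$ and $C'$ is maximal.

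For (II), Prop.~\ref{P:REU}(1) already places $\{C,C'\}$ in at least one partition of $AG(4,3)$. To rule out 2- and 6-completability, I would combine Thm.~\ref{T:DCAE} with Prop.~\ref{P:REU}(2): all demicap partitions of $C$ are related by elements of $\mathit{Aff}(4,3)$ that stabilize $C$, and such transformations preserve the $k$-completability type. It therefore suffices to check one-completability for a single representative pair $(D,D')$, for example the decomposition shown in Figure~\ref{F:Cunion}; the Cap Builder output in Figure~\ref{F:DD'1s} together with the explicit partition verification described after it confirms the case, and affine equivariance propagates the property to all 36 demicap partitions of $C$.

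For (III), I would recover $\{D,D'\}$ from $(C,C')$ intrinsically by examining how the 20 lines of the form $\{c',d,d'\}$ with $c'\in C'$, $d\in D$, $d'\in D'$ pair up points of $C$ according to their $D$- versus $D'$-membership; equivalently, the stabilizer-equivariance from step (II) partitions both the 36 demicap decompositions and the 36 one-completable partners into matching orbits, so a single instance of the bijection forces the whole correspondence. The main obstacle throughout is the cap property of $C'$ in step (I): ruling out collinear triples in $C'$ reduces to a finite but nontrivial classification of six-term zero-sum relations inside a demicap, and once that case analysis is in hand the remaining parts follow from inclusion-exclusion, affine equivalence, and the matched counts already supplied by Prop.~\ref{P:complement} and Prop.~\ref{P:REU}(1).
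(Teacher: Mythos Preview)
Your approach differs substantially from the paper's, which simply verifies the theorem computationally for the single maximal cap of Figure~\ref{F:AG43cap} (checking that all 36 demicap decompositions yield 36 distinct 1-completable partners) and then invokes the affine equivalence of maximal caps to transport the result everywhere. You instead attempt a direct conceptual argument, but step~(I) contains a genuine error.

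Your claim that every six-term relation $u_1+u_2+v_1+v_2+w_1+w_2=0$ arising from three non-$a$-line pairs of $D$ forces four $a$-lines of $D$ to be co-hyperplanar is false. With $D=\{\pm d_1,\dots,\pm d_5\}$ and $d_5=d_1+d_2+d_3+d_4$ as in the proof of Theorem~\ref{T:DCAE}, take the pairs $\{d_1,d_2\}$, $\{-d_1,d_3\}$, $\{-d_2,-d_3\}$: their sum is zero, yet they involve only the three $a$-lines through $\pm d_1,\pm d_2,\pm d_3$ and impose no co-hyperplanarity condition on $D$. The corresponding points $x=-(d_1+d_2)$, $y=d_1-d_3$, $z=d_2+d_3$ form a genuine line lying entirely inside $S_D$. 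So $S_D$ by itself is not a cap (unsurprising, since $|S_D|=40>20$), and you cannot establish that $C'=S_D\cap S_{D'}$ is line-free by analysing $D$ and $D'$ separately and then saying ``the same argument applied to $D'$ completes the ruling-out.'' The cap property of $C'$ depends on the interaction between $D$ and $D'$ as complementary halves of the particular maximal cap $C$, not merely on each being a demicap; your inclusion--exclusion bound $|C'|\ge 20$ is correct and elegant, but the hard direction needs something more.

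There is also a smaller gap in~(II): you assert that all demicap decompositions of $C$ are related by elements of $\mathit{Aff}(4,3)$ that stabilise $C$. Theorem~\ref{T:DCAE} gives affine equivalence of demicaps in general, but the transformation carrying one demicap of $C$ to another need not fix $C$, since each demicap lies in six maximal caps by Proposition~\ref{P:dcinfo}(3). Transitivity of $\mathrm{Stab}(C)$ on the 36 decompositions is true, but it requires an argument you have not supplied (and which the paper only addresses later, after the computational verification).
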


Because all maximal caps are affinely equivalent and any affine transformation sends demicaps to demicaps, verifying the theorem for one example of a maximal cap suffices as a proof of the theorem.

\subsection{The unique partition containing $C$ and $C'$.}

We have found a bijection between the complementary pairs of demicaps whose union is $C$ and the  maximal caps which make a 1-completable pair with $C$. Is it possible to find the unique partition containing $C$ and $C'$?  It turns out we already have almost everything we need to accomplish this.  

Recall, we started with a demicap decomposition $C=D\cup D'$; we then found the 40 points completing a line with points from $D$ and the 40 completing a line with points from $D'$, as pictured in Figure \ref{F:DD'1s}. Let $S_1$ be the 20 points that complete a line with points of $D$ that do not complete a line with points from $D'$.  $S_1$ contains lines, so it is not a maximal cap, however, $S_1$ contains 12 demicaps, in six complementary pairs.
 A particular one of  those decompositions is shown in Figure~\ref{F:Find4demis1}, with the two demicaps denoted $D_{1}$ and $D_{2}$.  The same is true of $S_2$, the 20 points that complete one line with two points from $D'$ but not from $D$; an example of those two decompositions is shown in Figure~\ref{F:Find4demis2}, where the demicaps we find are denoted $D_{1}'$ and $D_{2}'$.

\begin{figure}[h]
\begin{picture}(70,70)\setlength{\unitlength}{.015cm}		%
\usebox{\gridsm}
\put (22.5,116){\textcolor{red}{\circle* {8}}}	
\put (37.5,101){\textcolor{red}{\circle* {8}}}
\put (84.5,131){\textcolor{Tan}{\circle* {8}}}	
\put (54.5,116){\textcolor{red}{\circle* {8}}}
\put (116.5,131){\textcolor{Tan}{\circle* {8}}}	
\put (101.5,101){\textcolor{red}{\circle* {8}}}	
\put (22.5,84){\textcolor{red}{\circle* {8}}}		
\put (7.5,54){\textcolor{Tan}{\circle* {8}}}		
\put (54.5,84){\textcolor{Tan}{\circle* {8}}}		
\put (84.5,69){\textcolor{red}{\circle* {8}}}	
\put (69.5,54){\textcolor{red}{\circle* {8}}}	
\put (116.5,69){\textcolor{Tan}{\circle* {8}}}	
\put (131.5,54){\textcolor{Tan}{\circle* {8}}}	
\put (37.5,37){\textcolor{red}{\circle* {8}}}		
\put (7.5,22){\textcolor{Tan}{\circle* {8}}}		
\put (69.5,22){\textcolor{Tan}{\circle* {8}}}		
\put (84.5,7){\textcolor{Tan}{\circle* {8}}}		
\put (101.5,37){\textcolor{Tan}{\circle* {8}}}	
\put (131.5,22){\textcolor{red}{\circle* {8}}}
\put (116.5,7){\textcolor{red}{\circle* {8}}}	
\end{picture}
\begin{picture}(70,70)\setlength{\unitlength}{.015cm}		%
\usebox{\gridsm}
\put (22.5,116){\textcolor{red}{\circle* {8}}}	
\put (37.5,101){\textcolor{red}{\circle* {8}}}
\put (54.5,116){\textcolor{red}{\circle* {8}}}
\put (101.5,101){\textcolor{red}{\circle* {8}}}	
\put (22.5,84){\textcolor{red}{\circle* {8}}}	
\put (84.5,69){\textcolor{red}{\circle* {8}}}	
\put (69.5,54){\textcolor{red}{\circle* {8}}}		
\put (37.5,37){\textcolor{red}{\circle* {8}}}		
\put (131.5,22){\textcolor{red}{\circle* {8}}}
\put (116.5,7){\textcolor{red}{\circle* {8}}}	
\end{picture}
\begin{picture}(70,70)\setlength{\unitlength}{.015cm}		%
\usebox{\gridsm}
\put (84.5,131){\textcolor{Tan}{\circle* {8}}}	
\put (116.5,131){\textcolor{Tan}{\circle* {8}}}	
\put (7.5,54){\textcolor{Tan}{\circle* {8}}}		
\put (54.5,84){\textcolor{Tan}{\circle* {8}}}		
\put (116.5,69){\textcolor{Tan}{\circle* {8}}}	
\put (131.5,54){\textcolor{Tan}{\circle* {8}}}	
\put (7.5,22){\textcolor{Tan}{\circle* {8}}}		
\put (69.5,22){\textcolor{Tan}{\circle* {8}}}		
\put (84.5,7){\textcolor{Tan}{\circle* {8}}}		
\put (101.5,37){\textcolor{Tan}{\circle* {8}}}	
\end{picture}
\caption{Left: $S_1$ (20 points that complete a line with a pair of points from $D$ but not $D'$). Middle: $D_{1}$. Right: $D_{2}$ } 
\label{F:Find4demis1}

\begin{picture}(70,70)\setlength{\unitlength}{.015cm}		%
\usebox{\gridsm}
\put (84.5,116){\textcolor{blue}{\circle* {8}}}
\put (84.5,101){\textcolor{blue}{\circle* {8}}}
\put (116.5,116){\textcolor{blue}{\circle* {8}}}	
\put (116.5,101){\textcolor{blue}{\circle* {8}}}	
\put (22.5,54){\textcolor{blue}{\circle* {8}}}	
\put (37.5,54){\textcolor{blue}{\circle* {8}}}	
\put (69.5,84){\textcolor{green}{\circle* {8}}}		
\put (54.5,69){\textcolor{green}{\circle* {8}}}		
\put (69.5,69){\textcolor{green}{\circle* {8}}}		
\put (84.5,54){\textcolor{blue}{\circle* {8}}}	
\put (116.5,84){\textcolor{green}{\circle* {8}}}	
\put (101.5,54){\textcolor{green}{\circle* {8}}}	
\put (22.5,22){\textcolor{blue}{\circle* {8}}}		
\put (37.5,22){\textcolor{blue}{\circle* {8}}}	
\put (84.5,37){\textcolor{green}{\circle* {8}}}		
\put (54.5,22){\textcolor{green}{\circle* {8}}}		
\put (131.5,37){\textcolor{green}{\circle* {8}}}	
\put (116.5,22){\textcolor{blue}{\circle* {8}}}
\put (101.5,7){\textcolor{green}{\circle* {8}}}		
\put (131.5,7){\textcolor{green}{\circle* {8}}}		
\end{picture}
\begin{picture}(70,70)\setlength{\unitlength}{.015cm}		%
\usebox{\gridsm}
\put (84.5,116){\textcolor{blue}{\circle* {8}}}
\put (84.5,101){\textcolor{blue}{\circle* {8}}}
\put (116.5,116){\textcolor{blue}{\circle* {8}}}	
\put (116.5,101){\textcolor{blue}{\circle* {8}}}	
\put (22.5,54){\textcolor{blue}{\circle* {8}}}	
\put (37.5,54){\textcolor{blue}{\circle* {8}}}	
\put (84.5,54){\textcolor{blue}{\circle* {8}}}	
\put (22.5,22){\textcolor{blue}{\circle* {8}}}		
\put (37.5,22){\textcolor{blue}{\circle* {8}}}	
\put (116.5,22){\textcolor{blue}{\circle* {8}}}
\end{picture}
\begin{picture}(70,70)\setlength{\unitlength}{.015cm}		%
\usebox{\gridsm}
\put (69.5,84){\textcolor{green}{\circle* {8}}}	
\put (54.5,69){\textcolor{green}{\circle* {8}}}		
\put (69.5,69){\textcolor{green}{\circle* {8}}}		
\put (116.5,84){\textcolor{green}{\circle* {8}}}	
\put (101.5,54){\textcolor{green}{\circle* {8}}}	
\put (84.5,37){\textcolor{green}{\circle* {8}}}		
\put (54.5,22){\textcolor{green}{\circle* {8}}}		
\put (131.5,37){\textcolor{green}{\circle* {8}}}	
\put (101.5,7){\textcolor{green}{\circle* {8}}}		
\put (131.5,7){\textcolor{green}{\circle* {8}}}		
\end{picture}
\caption{Left: $S_2$ (20 points that complete a line with a pair of points from $D'$ but not $D$). Middle:  $D_{1}'$. Right:  $D_{2}'$. } 
\label{F:Find4demis2}
\end{figure}

\begin{figure}[h]
\begin{picture}(70,70)\setlength{\unitlength}{.015cm}		%
\usebox{\gridsm}
\put (22.5,116){\textcolor{red}{\circle* {8}}}	
\put (37.5,101){\textcolor{red}{\circle* {8}}}	
\put (54.5,116){\textcolor{red}{\circle* {8}}}	
\put (101.5,101){\textcolor{red}{\circle* {8}}}		
\put (22.5,84){\textcolor{red}{\circle* {8}}}		
\put (84.5,69){\textcolor{red}{\circle* {8}}}		
\put (69.5,54){\textcolor{red}{\circle* {8}}}		
\put (37.5,37){\textcolor{red}{\circle* {8}}}		
\put (131.5,22){\textcolor{red}{\circle* {8}}}		
\put (116.5,7){\textcolor{red}{\circle* {8}}}			
\put (84.5,116){\textcolor{blue}{\circle* {8}}}	
\put (84.5,101){\textcolor{blue}{\circle* {8}}}	
\put (116.5,116){\textcolor{blue}{\circle* {8}}}	
\put (116.5,101){\textcolor{blue}{\circle* {8}}}	
\put (22.5,54){\textcolor{blue}{\circle* {8}}}		
\put (37.5,54){\textcolor{blue}{\circle* {8}}}		
\put (84.5,54){\textcolor{blue}{\circle* {8}}}		
\put (22.5,22){\textcolor{blue}{\circle* {8}}}		
\put (37.5,22){\textcolor{blue}{\circle* {8}}}		
\put (116.5,22){\textcolor{blue}{\circle* {8}}}	
\end{picture}
\begin{picture}(70,70)\setlength{\unitlength}{.015cm}		%
\usebox{\gridsm}
\put (84.5,131){\textcolor{Tan}{\circle* {8}}}	
\put (116.5,131){\textcolor{Tan}{\circle* {8}}}	
\put (7.5,54){\textcolor{Tan}{\circle* {8}}}		
\put (54.5,84){\textcolor{Tan}{\circle* {8}}}		
\put (116.5,69){\textcolor{Tan}{\circle* {8}}}	
\put (131.5,54){\textcolor{Tan}{\circle* {8}}}	
\put (7.5,22){\textcolor{Tan}{\circle* {8}}}		
\put (69.5,22){\textcolor{Tan}{\circle* {8}}}		
\put (84.5,7){\textcolor{Tan}{\circle* {8}}}		
\put (101.5,37){\textcolor{Tan}{\circle* {8}}}	
\put (69.5,84){\textcolor{green}{\circle* {8}}}		
\put (54.5,69){\textcolor{green}{\circle* {8}}}		
\put (69.5,69){\textcolor{green}{\circle* {8}}}		
\put (116.5,84){\textcolor{green}{\circle* {8}}}	
\put (101.5,54){\textcolor{green}{\circle* {8}}}	
\put (84.5,37){\textcolor{green}{\circle* {8}}}		
\put (54.5,22){\textcolor{green}{\circle* {8}}}		
\put (131.5,37){\textcolor{green}{\circle* {8}}}	
\put (101.5,7){\textcolor{green}{\circle* {8}}}		
\put (131.5,7){\textcolor{green}{\circle* {8}}}		
\end{picture}
\caption{Left: $M_1=D_{1}\cup D_{1}'$. Right:  $M_2=D_{2}\cup D_{2}'$. These are the other two maximal caps in the partition containing $C$ and $C'$. } 
\label{F:6comps}
\end{figure}
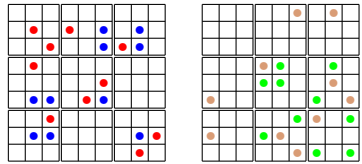

Taking all possible pairings of a demicap in $S_1$ with a demicap in $S_2$, there are only two pairings that form  a maximal cap. (For all other pairings, there are points in one demicap that complete a line with two points from the other demicap.) Further, the two demicaps in $S_1$ are a complementary pair, as are the two demicaps they are matched with in $S_2$.  The results obtained by pairing the demicaps from Figures~\ref{F:Find4demis1} and \ref{F:Find4demis2} are shown in Figure~\ref{F:6comps}.  We will call these maximal caps $M_1$ and $M_2$.   The reader can verify that $M_1$ and $M_2$ are both maximal caps. 

The two maximal caps found in this way are the two other maximal caps in the unique partition containing $C$ and $C'$; they are themselves a 1-completable pair.  We now have the unique partition that contains $C$ and $C'$:  $\{{\{a\}},C,C',M_1,M_2\}$. That partition is pictured in Figure \ref{F:Partition}; notice that it is a different partition than the one in Figure \ref{F:AG43ptn}. 

The demicaps whose unions produced $M_1$ and $M_2$ are special, not surprisingly.  The pair of demicaps in $M_1$ is exactly the pair that corresponds to $M_2$, via the correspondence given in Theorem~\ref{T:demicapcorresp}.  Similarly, the pair of demicaps in $M_2$ corresponds to $M_1$.

\begin{figure}[h]
\begin{picture}(70,70)\setlength{\unitlength}{.02cm}		%
\usebox{\grid}
\put (3,122){*}	  
\put (22.5,131){\circle* {8}}	
\put (37.5,131){\circle* {8}}	
\put (7.5,116){\circle* {8}}		
\put (7.5,101){\circle* {8}}		
\put (54.5,131){\circle* {8}}	
\put (69.5,116){\circle* {8}}	
\put (101.5,131){\circle* {8}}	
\put (131.5,101){\circle* {8}}	
\put (7.5,84){\circle* {8}}		
\put (22.5,69){\circle* {8}}		
\put (84.5,84){\circle* {8}}		
\put (54.5,54){\circle* {8}}		
\put (131.5,69){\circle* {8}}	
\put (116.5,54){\circle* {8}}	
\put (7.5,37){\circle* {8}}		
\put (37.5,7){\circle* {8}}		
\put (84.5,22){\circle* {8}}		
\put (69.5,7){\circle* {8}}		
\put (116.5,37){\circle* {8}}	
\put (101.5,22){\circle* {8}}	
\put (37.5,116){\textcolor{cyan}{\circle* {6}}	}
\put (22.5,101){\textcolor{cyan}{\circle* {6}}}	
\put (69.5,131){\textcolor{cyan}{\circle* {6}}}	
\put (54.5,101){\textcolor{cyan}{\circle* {6}}}	
\put (69.5,101){\textcolor{cyan}{\circle* {6}}}	
\put (131.5,131){\textcolor{cyan}{\circle* {6}}}	
\put (101.5,116){\textcolor{cyan}{\circle* {6}}}	
\put (131.5,116){\textcolor{cyan}{\circle* {6}}}	
\put (37.5,84){\textcolor{cyan}{\circle* {6}}}		
\put (7.5,69){\textcolor{cyan}{\circle* {6}}}		
\put (37.5,69){\textcolor{cyan}{\circle* {6}}}		
\put (101.5,84){\textcolor{cyan}{\circle* {6}}	}
\put (131.5,84){\textcolor{cyan}{\circle* {6}}	}
\put (101.5,69){\textcolor{cyan}{\circle* {6}}}	
\put (22.5,37){\textcolor{cyan}{\circle* {6}}}		
\put (7.5,7){\textcolor{cyan}{\circle* {6}}}		
\put (22.5,7){\textcolor{cyan}{\circle* {6}}}		
\put (54.5,37){\textcolor{cyan}{\circle* {6}}}		
\put (69.5,37){\textcolor{cyan}{\circle* {6}}}		
\put (54.5,7){\textcolor{cyan}{\circle* {6}}}		
\thicklines
\put (22.5,116){\textcolor{magenta}{\circle {8}}}	
\put (37.5,101){\textcolor{magenta}{\circle {8}}}	
\put (54.5,116){\textcolor{magenta}{\circle {8}}}	
\put (101.5,101){\textcolor{magenta}{\circle {8}}}	
\put (22.5,84){\textcolor{magenta}{\circle {8}}}		
\put (84.5,69){\textcolor{magenta}{\circle {8}}}		
\put (69.5,54){\textcolor{magenta}{\circle {8}}}		
\put (37.5,37){\textcolor{magenta}{\circle {8}}}		
\put (131.5,22){\textcolor{magenta}{\circle {8}}}	
\put (116.5,7){\textcolor{magenta}{\circle {8}}}		
\put (84.5,116){\textcolor{magenta}{\circle {8}}}	
\put (84.5,101){\textcolor{magenta}{\circle {8}}}	
\put (116.5,116){\textcolor{magenta}{\circle {8}}}	
\put (116.5,101){\textcolor{magenta}{\circle {8}}}	
\put (22.5,54){\textcolor{magenta}{\circle {8}}}		
\put (37.5,54){\textcolor{magenta}{\circle {8}}}		
\put (84.5,54){\textcolor{magenta}{\circle {8}}}		
\put (22.5,22){\textcolor{magenta}{\circle {8}}}		
\put (37.5,22){\textcolor{magenta}{\circle {8}}}		
\put (116.5,22){\textcolor{magenta}{\circle {8}}}	
\put (84.5,131){\textcolor{green}{\circle {5}}}	
\put (116.5,131){\textcolor{green}{\circle {5}}}	
\put (7.5,54){\textcolor{green}{\circle {5}}}		
\put (54.5,84){\textcolor{green}{\circle {5}}}		
\put (116.5,69){\textcolor{green}{\circle {5}}}	
\put (131.5,54){\textcolor{green}{\circle {5}}}	
\put (7.5,22){\textcolor{green}{\circle {5}}}		
\put (69.5,22){\textcolor{green}{\circle {5}}}		
\put (84.5,7){\textcolor{green}{\circle {5}}}		
\put (101.5,37){\textcolor{green}{\circle {5}}}	
\put (69.5,84){\textcolor{green}{\circle {5}}}		
\put (54.5,69){\textcolor{green}{\circle {5}}}		
\put (69.5,69){\textcolor{green}{\circle {5}}}		
\put (116.5,84){\textcolor{green}{\circle {5}}}	
\put (101.5,54){\textcolor{green}{\circle {5}}}	
\put (84.5,37){\textcolor{green}{\circle {5}}}		
\put (54.5,22){\textcolor{green}{\circle {5}}}		
\put (131.5,37){\textcolor{green}{\circle {5}}}	
\put (101.5,7){\textcolor{green}{\circle {5}}}		
\put (131.5,7){\textcolor{green}{\circle {5}}}		
\end{picture}
\caption{The partition of $AG(4,3)$ with anchor point $a$ (asterisk), $C$ (solid black circles), $C'$ (small solid blue circles), $M_1$ (large red empty circles), and $M_2$ (small empty green circles). } 
\label{F:Partition}
\end{figure}
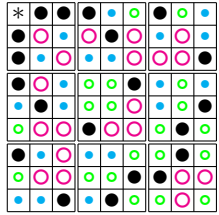

We can use either $M_1$ or $M_2$ to find the partition of $C'$ into 2 demicaps that corresponds with $C$. We will use $M_1=D_{1}\cup D_{1}'$.  From Corollary~\ref{C:demicapcompletecounts}, there are 40 points that complete one line with the points in $D_{1}$; 10 of those points are in $C'$, and they are a demicap. 
The complement of those points in $C'$ is, of course, a demicap.  
$C'$, the demicap $D_1$ and the two maximal caps whose union is $C'$ are shown in Figure~\ref{F:decompofC'}; they provide the decomposition of $C'$ into demicaps that corresponds to $C$.

\begin{figure}[htbp]
\begin{center}
\begin{picture}(70,70)\setlength{\unitlength}{.015cm}		%
\usebox{\gridsm}
\put (37.5,116){\textcolor{ProcessBlue}{\circle* {8}}}	
\put (22.5,101){\textcolor{ProcessBlue}{\circle* {8}}}	
\put (69.5,131){\textcolor{Salmon}{\circle* {8}}}	
\put (54.5,101){\textcolor{ProcessBlue}{\circle* {8}}}	
\put (69.5,101){\textcolor{ProcessBlue}{\circle* {8}}}	
\put (131.5,131){\textcolor{Salmon}{\circle* {8}}}	
\put (101.5,116){\textcolor{ProcessBlue}{\circle* {8}}}	
\put (131.5,116){\textcolor{ProcessBlue}{\circle* {8}}}	
\put (37.5,84){\textcolor{ProcessBlue}{\circle* {8}}}		
\put (7.5,69){\textcolor{Salmon}{\circle* {8}}}		
\put (37.5,69){\textcolor{ProcessBlue}{\circle* {8}}}		
\put (101.5,84){\textcolor{Salmon}{\circle* {8}}}	
\put (131.5,84){\textcolor{Salmon}{\circle* {8}}}	
\put (101.5,69){\textcolor{Salmon}{\circle* {8}}}	
\put (22.5,37){\textcolor{ProcessBlue}{\circle* {8}}}		
\put (7.5,7){\textcolor{Salmon}{\circle* {8}}}		
\put (22.5,7){\textcolor{ProcessBlue}{\circle* {8}}}		
\put (54.5,37){\textcolor{Salmon}{\circle* {8}}}		
\put (69.5,37){\textcolor{Salmon}{\circle* {8}}}		
\put (54.5,7){\textcolor{Salmon}{\circle* {8}}}		
\end{picture} \hspace{.1in}
\includegraphics[width=.83in]{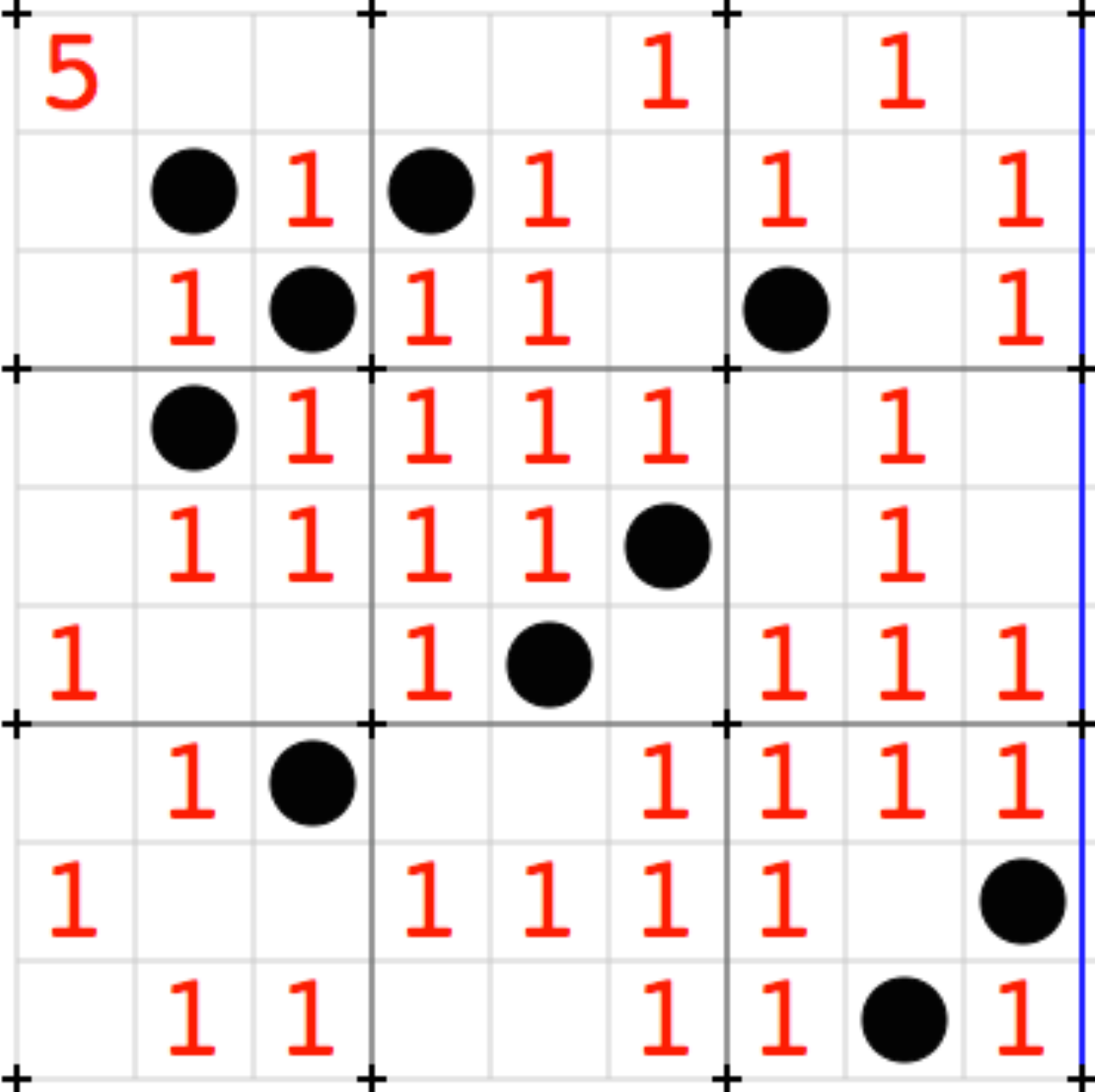}
\hspace{.1in}
\begin{picture}(70,70)\setlength{\unitlength}{.015cm}		%
\usebox{\gridsm}
\put (37.5,116){\textcolor{ProcessBlue}{\circle* {8}}}	
\put (22.5,101){\textcolor{ProcessBlue}{\circle* {8}}}	
\put (54.5,101){\textcolor{ProcessBlue}{\circle* {8}}}	
\put (69.5,101){\textcolor{ProcessBlue}{\circle* {8}}}	
\put (101.5,116){\textcolor{ProcessBlue}{\circle* {8}}}	
\put (131.5,116){\textcolor{ProcessBlue}{\circle* {8}}}	
\put (37.5,84){\textcolor{ProcessBlue}{\circle* {8}}}		
\put (37.5,69){\textcolor{ProcessBlue}{\circle* {8}}}		
\put (22.5,37){\textcolor{ProcessBlue}{\circle* {8}}}		
\put (22.5,7){\textcolor{ProcessBlue}{\circle* {8}}}		
\end{picture}
\begin{picture}(70,70)\setlength{\unitlength}{.015cm}		%
\usebox{\gridsm}
\put (69.5,131){\textcolor{Salmon}{\circle* {8}}}	
\put (131.5,131){\textcolor{Salmon}{\circle* {8}}}	
\put (7.5,69){\textcolor{Salmon}{\circle* {8}}}		
\put (101.5,84){\textcolor{Salmon}{\circle* {8}}}	
\put (131.5,84){\textcolor{Salmon}{\circle* {8}}}	
\put (101.5,69){\textcolor{Salmon}{\circle* {8}}}	
\put (7.5,7){\textcolor{Salmon}{\circle* {8}}}		
\put (54.5,37){\textcolor{Salmon}{\circle* {8}}}		
\put (69.5,37){\textcolor{Salmon}{\circle* {8}}}		
\put (54.5,7){\textcolor{Salmon}{\circle* {8}}}		
\end{picture}
\caption{Left: $C'$. Middle left: $D_1$ and the points that complete a line with two points from $D_1$. Middle right: the demicap of points in $C'$ that complete a line with points in $D_1$. Right: the demicap in $C'$ that completes no lines with points in $D_1$.}
\label{F:decompofC'}
\end{center}
\end{figure}

We summarize these results in a proposition.  As before, because all of what we did is preserved under affine transformations, the example we used provides the justification that this procedure will produce the same results  for any other maximal cap.

\begin{Prop}\label{P:gettheptn}
Let $C$ be a maximal cap in $AG(4,3)$, and let $C=D\cup D'$ be a decomposition of $C$ into two disjoint demicaps. Let $C'$ be the  maximal cap in exactly one partition with $C$ determined by $D$ and $D'$ via the correspondence given in Theorem~\ref{T:demicapcorresp}. Then:
\begin{enumerate}
\item Let $S_1$ be the 20 points in $AG(4,3)$ that complete a line with a pair of points from $D$ that do not complete a line with a pair of points from $D'$.  Similarly, let $S_2$ be the 20 points in $AG(4,3)$ that complete a line with a pair of points from $D'$ that do not complete a line with a pair of points from $D$.  Both $S_1$ and $S_2$ can be partitioned into a pair of disjoint demicaps in six ways. 
\item Exactly two of the demicaps $D_1$ and $D_2$ in $S_1$ can be matched with a demicap $D_1'$ and $D_2'$ respectively, in $S_2$ so that the union of those demicaps is a maximal cap: $M_1=D_1\cup D_1'$ and $M_2 = D_2 \cup D_2'$. $D_1$ and $D_2$ are a complementary pair in $S_1$, as are $D_1'$ and $D_2'$ in $S_2$.
\item $\{ \{a\},C,C',M_1,M_2\}$ is a partition of $AG(4,3)$ into two 1-completable pairs $\{C,C'\}$ and $\{M_1,M_2\}$.
\item The demicap partitions given in (2) for $M_1$ and $M_2$ determines the other maximal cap via the correspondence from Theorem~\ref{T:demicapcorresp}.
\item Let $D_1$ be one of the demicaps in $M_1$ as in (2). If $D''$ is the set of points in $C'$  that complete 1 line with a pair of points  from $D_1$, then $D''$ is a demicap.  If $D'''=C'\backslash D''$, then  $C' = D'' \cup D'''$ is the demicap decomposition of $C'$ that corresponds to $C$.
\end{enumerate}
\end{Prop}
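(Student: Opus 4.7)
The plan is to reduce the entire proposition to verification in a single explicit triple, using transitivity of the affine action. By Proposition \ref{P:REU}(2), (5), $\mathit{Aff}(4,3)$ acts transitively on 1-completable pairs of maximal caps. The bijection of Theorem \ref{T:demicapcorresp} between the 36 demicap decompositions of $C$ and the 36 maximal caps forming a 1-completable pair with $C$ is $\mathit{Aff}(4,3)$-equivariant, since affine transformations send demicaps to demicaps (Theorem \ref{T:DCAE}) and preserve the ``completes one line with a pair of points'' incidence relation. It follows that the group acts transitively on triples $(C, D, D')$ where $D \cup D'$ is a demicap decomposition of a maximal cap $C$. Every assertion of the proposition is expressed in terms of incidences preserved by affine transformations, so it suffices to verify the proposition for the specific triple displayed in Figures \ref{F:Cunion}, \ref{F:Find4demis1}, and \ref{F:Find4demis2}.

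For part (1), $|S_1| = |S_2| = 20$ follows from Corollary \ref{C:demicapcompletecounts} together with the identification of $C'$ as the 20 non-anchor points that complete a line with both $D$ and $D'$. Direct enumeration with the Cap Builder confirms that $S_1$ and $S_2$ each contain exactly 12 demicaps, and that these split into 6 complementary pairs in each of $S_1$ and $S_2$. For part (2), I would exhaustively pair each of the 6 demicaps of $S_1$ with each of the 6 demicaps of $S_2$ and test whether the union of 20 points is a maximal cap; the computation shows exactly two pairings succeed, and the two $S_1$-demicaps involved are complementary, as are their $S_2$ partners.

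Part (3) follows by a counting argument once (1) and (2) are established: the sets $\{a\}$, $C$, $C'$, $S_1$, and $S_2$ are pairwise disjoint of total cardinality $1 + 20 + 20 + 20 + 20 = 81 = |AG(4,3)|$, so they partition the geometry; since $S_1 = D_1 \cup D_2$ and $S_2 = D_1' \cup D_2'$, the collection $\{\{a\}, C, C', M_1, M_2\}$ is a partition into four disjoint maximal caps plus their common anchor. By Proposition \ref{P:REU}(3) the resulting two pairs are either both 1-completable or both 2-completable, and since $\{C, C'\}$ is 1-completable by hypothesis, $\{M_1, M_2\}$ must be as well. For part (4), the decomposition $M_1 = D_1 \cup D_1'$ determines via Theorem \ref{T:demicapcorresp} a unique 1-completable partner of $M_1$, and by the uniqueness of the partition containing $M_1$ with a specified 1-completable partner (Proposition \ref{P:REU}(4)) that partner must be $M_2$; the same argument applied to $M_2$ gives $M_1$. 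For part (5), I would verify directly in the example that the 10 points of $C'$ completing a line with $D_1$ form a demicap $D''$, after which Proposition \ref{P:complement} gives that $D''' = C' \setminus D''$ is its complementary demicap in $C'$, and applying Theorem \ref{T:demicapcorresp} to $C' = D'' \cup D'''$ returns $C$ as the 1-completable partner.

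The main obstacle is the equivariance reduction itself. Without it, each enumeration (how many demicap pairs partition $S_1$, which cross-pairings yield maximal caps, which 10 points of $C'$ form a demicap, and so on) would need an abstract proof rather than a single finite check; once equivariance is in place the remaining arguments reduce to the structured computations already displayed in the figures.
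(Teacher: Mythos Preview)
Your approach---reduce to a single explicit triple via affine transitivity and equivariance, then verify computationally---is exactly what the paper does; the paper states just before the proposition that ``because all of what we did is preserved under affine transformations, the example we used provides the justification.'' Your explicit articulation of transitivity on triples $(C,D,D')$ and the counting argument for part~(3) (that $\{a\},C,C',S_1,S_2$ partition $AG(4,3)$ by cardinality) are in fact more careful than the paper's treatment, which simply points to the figures.

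One small gap: your conceptual shortcut for part~(4) does not work as written. You argue that since $\{M_1,M_2\}$ is a 1-completable pair and the decomposition $M_1=D_1\cup D_1'$ determines \emph{some} 1-completable partner $M'$ via Theorem~\ref{T:demicapcorresp}, uniqueness forces $M'=M_2$. But $M_1$ has 36 distinct 1-completable partners, one for each of its demicap decompositions, and nothing in your argument singles out $M_2$ among them; Proposition~\ref{P:REU}(4), which you cite, concerns 6-completable pairs and is not relevant here. The paper handles~(4) purely by checking in the example that the correspondence applied to $D_1\cup D_1'$ returns $M_2$. Since your first paragraph already establishes that example verification suffices for every part, you should simply fall back on that for~(4) rather than invoke the uniqueness argument.
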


We now have the mechanism that provides a direct bijection between the demicap decompositions of $C$ and the 36 maximal caps that make a 1-completable pair with $C$.  Even more, we can use the same pair of demicaps to identify all the maximal caps in the unique partition containing $C$ and $C'$, and we can also find the demicap decomposition of each of the maximal caps in the partition that corresponds to the other cap making a 1-completable pair in that partition.

We can now find all 36 maximal caps that are in a unique partition with $C$, by finding all the demicaps contained in $C$ and using the procedures described in this section.  However, in the next section, we introduce another way to find all 36 maximal caps in a 1-completable pair with $C$, and it turns out to be remarkably simple.


\section{The 36 maximal caps that are in one partition with $C$.}\label{S:36caps}

Our goal now is to find all 36 maximal caps that are in exactly one partition with $C$.  As seen above, we could do this by taking each of the pairs of disjoint demicaps whose union is $C$ and following the procedure in Theorem~\ref{T:demicapcorresp}, but there is a more illuminating way to find them.

\subsection{Finding the 36 maximal caps in exactly one partition with $C$.}

Start with a maximal cap $C$, two demicaps whose union is $C$, and the maximal cap $C'$ which makes a 1-completable pair with $C$, using the procedure in Theorem~\ref{T:demicapcorresp}. Using the procedure in Proposition~\ref{P:gettheptn}, find the two demicaps $C_1$ and $R_1$ (the choice for these names will become clear soon) whose union is $C'$ and which correspond to $C$ as in Theorem~\ref{T:demicapcorresp}.  
 
First, find the six maximal caps that $C_1$ is in.  
Notably, each of those maximal caps is in a unique partition of $AG(4,3)$ with $C$. 
Find the complement of $C_1$ in each of those maximal caps, one of which will be $R_1$.  Those six are pictured in Figure~\ref{F:Rdemicaps}; the one in the upper left is $R_1$. We will call these $R_1,R_2,\dots,R_6$.

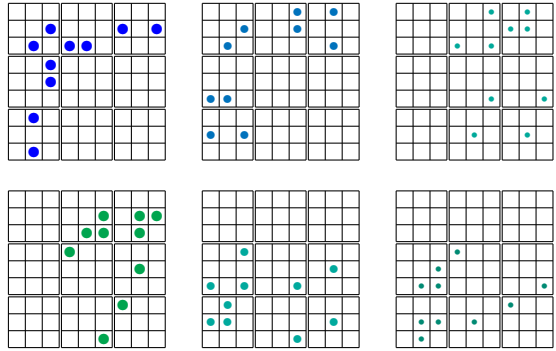
\begin{figure}[h]
\begin{picture}(70,70)\setlength{\unitlength}{.015cm}		%
\usebox{\gridsm}
\put (37.5,116){\textcolor{blue}{\circle* {10}}}
\put (22.5,101){\textcolor{blue}{\circle* {10}}}
\put (54.5,101){\textcolor{blue}{\circle* {10}}}
\put (69.5,101){\textcolor{blue}{\circle* {10}}}
\put (101.5,116){\textcolor{blue}{\circle* {10}}}
\put (131.5,116){\textcolor{blue}{\circle* {10}}}
\put (37.5,84){\textcolor{blue}{\circle* {10}}}
\put (37.5,69){\textcolor{blue}{\circle* {10}}}
\put (22.5,37){\textcolor{blue}{\circle* {10}}}
\put (22.5,7){\textcolor{blue}{\circle* {10}}}
\end{picture}
\begin{picture}(70,70)\setlength{\unitlength}{.015cm}		%
\usebox{\gridsm}
\put (37.5,116){\textcolor{RoyalBlue}{\circle* {7}}}
\put (22.5,101){\textcolor{RoyalBlue}{\circle* {7}}}
\put (84.5,131){\textcolor{RoyalBlue}{\circle* {7}}}
\put (84.5,116){\textcolor{RoyalBlue}{\circle* {7}}}
\put (116.5,131){\textcolor{RoyalBlue}{\circle* {7}}}
\put (116.5,101){\textcolor{RoyalBlue}{\circle* {7}}}
\put (7.5,54){\textcolor{RoyalBlue}{\circle* {7}}}	
\put (22.5,54){\textcolor{RoyalBlue}{\circle* {7}}}
\put (7.5,22){\textcolor{RoyalBlue}{\circle* {7}}}	
\put (37.5,22){\textcolor{RoyalBlue}{\circle* {7}}}
\end{picture}
\begin{picture}(70,70)\setlength{\unitlength}{.015cm}		%
\usebox{\gridsm}
\put (84.5,131){\textcolor{JungleGreen}{\circle* {4}}}
\put (54.5,101){\textcolor{JungleGreen}{\circle* {4}}}
\put (84.5,101){\textcolor{JungleGreen}{\circle* {4}}}
\put (116.5,131){\textcolor{JungleGreen}{\circle* {4}}}
\put (101.5,116){\textcolor{JungleGreen}{\circle* {4}}}
\put (116.5,116){\textcolor{JungleGreen}{\circle* {4}}}
\put (84.5,54){\textcolor{JungleGreen}{\circle* {4}}}
\put (131.5,54){\textcolor{JungleGreen}{\circle* {4}}}
\put (69.5,22){\textcolor{JungleGreen}{\circle* {4}}}
\put (116.5,22){\textcolor{JungleGreen}{\circle* {4}}}
\end{picture} \\
\begin{picture}(70,70)\setlength{\unitlength}{.015cm}		%
\usebox{\gridsm}
\put (84.5,116){\textcolor{Green}{\circle* {10}}}	
\put (69.5,101){\textcolor{Green}{\circle* {10}}}	
\put (84.5,101){\textcolor{Green}{\circle* {10}}}	
\put (116.5,116){\textcolor{Green}{\circle* {10}}}	
\put (131.5,116){\textcolor{Green}{\circle* {10}}}	
\put (116.5,101){\textcolor{Green}{\circle* {10}}}	
\put (54.5,84){\textcolor{Green}{\circle* {10}}}	
\put (116.5,69){\textcolor{Green}{\circle* {10}}}	
\put (84.5,7){\textcolor{Green}{\circle* {10}}}		
\put (101.5,37){\textcolor{Green}{\circle* {10}}}	
\end{picture}
\begin{picture}(70,70)\setlength{\unitlength}{.015cm}		%
\usebox{\gridsm}
\put (37.5,84){\textcolor{Emerald}{\circle* {7}}}	
\put (7.5,54){\textcolor{Emerald}{\circle* {7}}}	    
\put (37.5,54){\textcolor{Emerald}{\circle* {7}}}	
\put (84.5,54){\textcolor{Emerald}{\circle* {7}}}	
\put (116.5,69){\textcolor{Emerald}{\circle* {7}}}	
\put (22.5,37){\textcolor{Emerald}{\circle* {7}}}	
\put (7.5,22){\textcolor{Emerald}{\circle* {7}}}	
\put (22.5,22){\textcolor{Emerald}{\circle* {7}}}	
\put (84.5,7){\textcolor{Emerald}{\circle* {7}}}	
\put (116.5,22){\textcolor{Emerald}{\circle* {7}}}	
\end{picture}
\begin{picture}(70,70)\setlength{\unitlength}{.015cm}		%
\usebox{\gridsm}
\put (37.5,69){\textcolor{PineGreen}{\circle* {4}}}	
\put (22.5,54){\textcolor{PineGreen}{\circle* {4}}}	
\put (37.5,54){\textcolor{PineGreen}{\circle* {4}}}	
\put (54.5,84){\textcolor{PineGreen}{\circle* {4}}}	
\put (131.5,54){\textcolor{PineGreen}{\circle* {4}}}	
\put (22.5,22){\textcolor{PineGreen}{\circle* {4}}}	
\put (37.5,22){\textcolor{PineGreen}{\circle* {4}}}	
\put (22.5,7){\textcolor{PineGreen}{\circle* {4}}}	    
\put (69.5,22){\textcolor{PineGreen}{\circle* {4}}}	
\put (101.5,37){\textcolor{PineGreen}{\circle* {4}}}	
\end{picture}
\caption{The 6 demicaps $R_1,R_2,\dots,R_6$ whose union with $C_1$ is a maximal cap.} 
\label{F:Rdemicaps}
\end{figure}

Next, find the six maximal caps that $R_1$ is in.  
Again, each of those is in a unique partition of $AG(4,3)$ with $C$.
Find the complement of $R_1$ in each of those maximal caps, one of which will be $C_1$.  Those six are pictured in Figure~\ref{F:Cdemicaps}; the one in the upper left is $C_1$. We will call these $C_1,C_2,\dots,C_6$.

\begin{figure}[h]
\begin{picture}(70,70)\setlength{\unitlength}{.015cm}		%
\usebox{\gridsm}
\thicklines
\put (69.5,131){\textcolor{Red}{\circle{10}}}	
\put (131.5,131){\textcolor{Red}{\circle{10}}}	
\put (7.5,69){\textcolor{Red}{\circle{10}}}	
\put (101.5,84){\textcolor{Red}{\circle{10}}}	
\put (131.5,84){\textcolor{Red}{\circle{10}}}	
\put (101.5,69){\textcolor{Red}{\circle{10}}}	
\put (7.5,7){\textcolor{Red}{\circle{10}}}		
\put (54.5,37){\textcolor{Red}{\circle{10}}}	
\put (69.5,37){\textcolor{Red}{\circle{10}}}	
\put (54.5,7){\textcolor{Red}{\circle{10}}}	
\end{picture}
\begin{picture}(70,70)\setlength{\unitlength}{.015cm}		%
\usebox{\gridsm}
\thicklines
\put (22.5,116){\textcolor{BrickRed}{\circle{7}}}	
\put (37.5,101){\textcolor{BrickRed}{\circle{7}}}	
\put (69.5,84){\textcolor{BrickRed}{\circle{7}}}	
\put (69.5,54){\textcolor{BrickRed}{\circle{7}}}	
\put (116.5,84){\textcolor{BrickRed}{\circle{7}}}	
\put (131.5,84){\textcolor{BrickRed}{\circle{7}}}	
\put (69.5,37){\textcolor{BrickRed}{\circle{7}}}	
\put (84.5,37){\textcolor{BrickRed}{\circle{7}}}	
\put (131.5,37){\textcolor{BrickRed}{\circle{7}}}	
\put (131.5,22){\textcolor{BrickRed}{\circle{7}}}	
\end{picture}
\begin{picture}(70,70)\setlength{\unitlength}{.015cm}		%
\usebox{\gridsm}
\thicklines
\put (22.5,116){\textcolor{Plum}{\circle{4}}}	
\put (37.5,101){\textcolor{Plum}{\circle{4}}}	
\put (54.5,69){\textcolor{Plum}{\circle{4}}}	
\put (84.5,69){\textcolor{Plum}{\circle{4}}}	
\put (101.5,69){\textcolor{Plum}{\circle{4}}}	
\put (101.5,54){\textcolor{Plum}{\circle{4}}}	
\put (54.5,22){\textcolor{Plum}{\circle{4}}}	
\put (54.5,7){\textcolor{Plum}{\circle{4}}}		
\put (101.5,7){\textcolor{Plum}{\circle{4}}}	
\put (116.5,7){\textcolor{Plum}{\circle{4}}}	
\end{picture} \\
\begin{picture}(70,70)\setlength{\unitlength}{.015cm}		%
\usebox{\gridsm}
\thicklines
\put (69.5,131){\textcolor{Thistle}{\circle{10}}}	
\put (131.5,131){\textcolor{Thistle}{\circle{10}}}		
\put (22.5,84){\textcolor{Thistle}{\circle{10}}}	
\put (69.5,84){\textcolor{Thistle}{\circle{10}}}		
\put (69.5,69){\textcolor{Thistle}{\circle{10}}}		
\put (84.5,69){\textcolor{Thistle}{\circle{10}}}		
\put (37.5,37){\textcolor{Thistle}{\circle{10}}}		
\put (131.5,37){\textcolor{Thistle}{\circle{10}}}		
\put (116.5,7){\textcolor{Thistle}{\circle{10}}}		
\put (131.5,7){\textcolor{Thistle}{\circle{10}}}		
\end{picture}
\begin{picture}(70,70)\setlength{\unitlength}{.015cm}		%
\usebox{\gridsm}
\thicklines
\put (54.5,116){\textcolor{Orange}{\circle{7}}}
\put (101.5,101){\textcolor{Orange}{\circle{7}}}	
\put (22.5,84){\textcolor{Orange}{\circle{7}}}	
\put (101.5,84){\textcolor{Orange}{\circle{7}}}
\put (116.5,84){\textcolor{Orange}{\circle{7}}}	
\put (101.5,54){\textcolor{Orange}{\circle{7}}}	
\put (37.5,37){\textcolor{Orange}{\circle{7}}}	
\put (54.5,37){\textcolor{Orange}{\circle{7}}}  
\put (84.5,37){\textcolor{Orange}{\circle{7}}}	
\put (54.5,22){\textcolor{Orange}{\circle{7}}}	
\end{picture}
\begin{picture}(70,70)\setlength{\unitlength}{.015cm}		%
\usebox{\gridsm}
\thicklines
\put (54.5,116){\textcolor{Magenta}{\circle{4}}}	
\put (101.5,101){\textcolor{Magenta}{\circle{4}}}	
\put (7.5,69){\textcolor{Magenta}{\circle{4}}}	
\put (54.5,69){\textcolor{Magenta}{\circle{4}}}	
\put (69.5,69){\textcolor{Magenta}{\circle{4}}}	
\put (69.5,54){\textcolor{Magenta}{\circle{4}}}	
\put (7.5,7){\textcolor{Magenta}{\circle{4}}}		
\put (131.5,22){\textcolor{Magenta}{\circle{4}}}	
\put (101.5,7){\textcolor{Magenta}{\circle{4}}}		
\put (131.5,7){\textcolor{Magenta}{\circle{4}}}	
\end{picture}
\caption{The 6 demicaps $C_1,C_2,\dots,C_6$ whose union with $R_1$ is a maximal cap.} 
\label{F:Cdemicaps}
\end{figure}
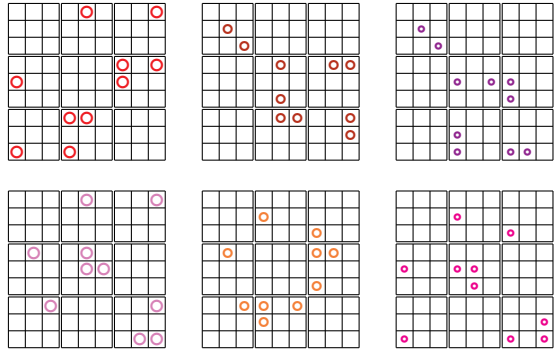

The next theorem describes the significance of the demicaps $C_i$ and $R_j$ that we have just found.  The authors have verified the theorem for the maximal cap $C$ we have been using throughout this paper.  Once again, because affine transformations preserve maximal caps, demicaps, and line incidences, our work with that maximal cap provides the same result for all maximal caps.

\begin{Thm}
Let $C$ and $C'$ be a 1-completable pair of maximal caps, and let $C_1$ and $R_1$ be the demicap decomposition of $C'$ that corresponds to $C$ via the correspondence in Theorem~\ref{T:demicapcorresp}. $C_1$ is in six maximal caps; let $R_1, \dots, R_6$ be the six complements of $C_1$ in those maximal caps.  Similarly, let $C_1,\dots,C_6$ denote the six complements of $R_1$ in the six maximal caps $R_1$ is in. Then for any $1\leq i,j\leq6$, $R_i\cup C_j$ is a maximal cap.  Further, the set of 36 maximal caps $\{R_i\cup C_j\},1\leq i,j\leq6$ is the set of 36 maximal caps that are in exactly one partition with $C$. 
\end{Thm}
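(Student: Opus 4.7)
The plan is to reduce the theorem to a single computational verification. Every object appearing in the statement transports under affine transformations: Hill's theorem gives that maximal caps map to maximal caps, demicaps map to demicaps by definition, and Proposition \ref{P:REU}(2) says that 1-completability of a pair is preserved. Consequently, if the conclusion holds for one maximal cap $C$ it holds for every one, and I fix the cap $C$ of Figure \ref{F:AG43cap} together with the demicap decomposition $C = D \cup D'$ used throughout Sections \ref{S:results} and \ref{S:36caps}, and with $C_1, R_1$ produced from them by the procedure of Proposition \ref{P:gettheptn}.

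First I would check that the construction is well posed. Proposition \ref{P:dcinfo}(3) says that each demicap lies in exactly six maximal caps, and Proposition \ref{P:complement} says that the complement of a demicap in a maximal cap is again a demicap. Hence $R_1,\dots,R_6$ and $C_1,\dots,C_6$ are bona fide demicaps, each consisting of five $a$-lines. Since $C' = C_1\cup R_1$ is simultaneously one of the six maximal caps containing $C_1$ and one of the six containing $R_1$, the demicaps $R_1$ and $C_1$ appear in both lists. By construction $C_1\cup R_i$ is a maximal cap for every $i$ and $R_1\cup C_j$ is a maximal cap for every $j$, so the first row and first column of the $6\times 6$ grid of unions are already known to be maximal caps.

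The substantive step, to be carried out once on the chosen $C$ using the Cap Builder, is to verify for every pair $(i,j)$ with $1\le i,j\le 6$ that (a) $R_i\cap C_j = \emptyset$; (b) $R_i\cup C_j$ contains no line other than the ten $a$-lines through the common anchor; (c) $(R_i\cup C_j)\cap C = \emptyset$; and (d) the pair $\{C,\,R_i\cup C_j\}$ appears in exactly one partition of $AG(4,3)$ into four disjoint maximal caps plus $a$. Conditions (a) and (b) promote $R_i\cup C_j$ to a 20-point maximal cap with anchor $a$; (c) makes the pair disjoint; (d) makes it 1-completable. A final tally that the 36 unions $R_i\cup C_j$ are pairwise distinct, combined with the count of exactly 36 one-completable partners of $C$ from Proposition \ref{P:REU}(1), then forces equality of sets.

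The main obstacle is step (d). Disjointness and the line-free condition are straightforward incidence checks on 20-point sets, but distinguishing 1-completability from 2- or 6-completability for each of the 36 candidate partners requires knowing the structure of every partition in which $\{C,\,R_i\cup C_j\}$ sits. Under the reduction to a single $C$ this is a finite but nontrivial enumeration, and it is the one place where the geometric content of the theorem is concentrated; every other step is either an unwinding of the definitions or an application of results already established in Sections \ref{S:demics} and \ref{S:results}.
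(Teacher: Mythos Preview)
Your proposal is correct and mirrors the paper's own argument: the paper simply states that the theorem was verified computationally for the specific cap $C$ of Figure~\ref{F:AG43cap}, and then invokes the fact that affine transformations preserve maximal caps, demicaps, and line incidences (hence also the correspondence of Theorem~\ref{T:demicapcorresp} and 1-completability) to conclude the result for all $C$. Your write-up is more explicit than the paper about what the verification entails (your steps (a)--(d) and the distinctness count), but the approach is the same.
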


This result is surprising: those 36 maximal caps are generated by just 12 demicaps in two sets of six.  This is why we used $R_i$ and $C_j$: we will place them in rows and columns.  Figure~\ref{F:Some1comps} shows a $2\times3$ section of the entire grid of 36, demonstrating the structure of these maximal caps.  

\begin{figure}[h]
\begin{picture}(70,70)\setlength{\unitlength}{.015cm}		%
\usebox{\gridsm}
\put (37.5,116){\textcolor{blue}{\circle* {10}}}
\put (22.5,101){\textcolor{blue}{\circle* {10}}}
\put (54.5,101){\textcolor{blue}{\circle* {10}}}
\put (69.5,101){\textcolor{blue}{\circle* {10}}}
\put (101.5,116){\textcolor{blue}{\circle* {10}}}
\put (131.5,116){\textcolor{blue}{\circle* {10}}}
\put (37.5,84){\textcolor{blue}{\circle* {10}}}
\put (37.5,69){\textcolor{blue}{\circle* {10}}}
\put (22.5,37){\textcolor{blue}{\circle* {10}}}
\put (22.5,7){\textcolor{blue}{\circle* {10}}}
\thicklines
\put (69.5,131){\textcolor{Red}{\circle {10}}}	
\put (131.5,131){\textcolor{Red}{\circle {10}}}	
\put (7.5,69){\textcolor{Red}{\circle {10}}}	
\put (101.5,84){\textcolor{Red}{\circle {10}}}	
\put (131.5,84){\textcolor{Red}{\circle {10}}}	
\put (101.5,69){\textcolor{Red}{\circle {10}}}	
\put (7.5,7){\textcolor{Red}{\circle {10}}}		
\put (54.5,37){\textcolor{Red}{\circle {10}}}	
\put (69.5,37){\textcolor{Red}{\circle {10}}}	
\put (54.5,7){\textcolor{Red}{\circle {10}}}	
\end{picture}
\begin{picture}(70,70)\setlength{\unitlength}{.015cm}		%
\usebox{\gridsm}
\put (37.5,116){\textcolor{blue}{\circle* {10}}}
\put (22.5,101){\textcolor{blue}{\circle* {10}}}
\put (54.5,101){\textcolor{blue}{\circle* {10}}}
\put (69.5,101){\textcolor{blue}{\circle* {10}}}
\put (101.5,116){\textcolor{blue}{\circle* {10}}}
\put (131.5,116){\textcolor{blue}{\circle* {10}}}
\put (37.5,84){\textcolor{blue}{\circle* {10}}}
\put (37.5,69){\textcolor{blue}{\circle* {10}}}
\put (22.5,37){\textcolor{blue}{\circle* {10}}}
\put (22.5,7){\textcolor{blue}{\circle* {10}}}
\thicklines
\put (22.5,116){\textcolor{BrickRed}{\circle {7}}}	
\put (37.5,101){\textcolor{BrickRed}{\circle {7}}}	
\put (69.5,84){\textcolor{BrickRed}{\circle {7}}}	
\put (69.5,54){\textcolor{BrickRed}{\circle {7}}}	
\put (116.5,84){\textcolor{BrickRed}{\circle {7}}}	
\put (131.5,84){\textcolor{BrickRed}{\circle {7}}}	
\put (69.5,37){\textcolor{BrickRed}{\circle {7}}}	
\put (84.5,37){\textcolor{BrickRed}{\circle {7}}}	
\put (131.5,37){\textcolor{BrickRed}{\circle {7}}}	
\put (131.5,22){\textcolor{BrickRed}{\circle {7}}}	
\end{picture}
\begin{picture}(70,70)\setlength{\unitlength}{.015cm}		%
\usebox{\gridsm}
\put (37.5,116){\textcolor{blue}{\circle* {10}}}
\put (22.5,101){\textcolor{blue}{\circle* {10}}}
\put (54.5,101){\textcolor{blue}{\circle* {10}}}
\put (69.5,101){\textcolor{blue}{\circle* {10}}}
\put (101.5,116){\textcolor{blue}{\circle* {10}}}
\put (131.5,116){\textcolor{blue}{\circle* {10}}}
\put (37.5,84){\textcolor{blue}{\circle* {10}}}
\put (37.5,69){\textcolor{blue}{\circle* {10}}}
\put (22.5,37){\textcolor{blue}{\circle* {10}}}
\put (22.5,7){\textcolor{blue}{\circle* {10}}}
\thicklines
\put (22.5,116){\textcolor{Plum}{\circle {4}}}	
\put (37.5,101){\textcolor{Plum}{\circle {4}}}	
\put (54.5,69){\textcolor{Plum}{\circle {4}}}	
\put (84.5,69){\textcolor{Plum}{\circle {4}}}	
\put (101.5,69){\textcolor{Plum}{\circle {4}}}	
\put (101.5,54){\textcolor{Plum}{\circle {4}}}	
\put (54.5,22){\textcolor{Plum}{\circle {4}}}	
\put (54.5,7){\textcolor{Plum}{\circle {4}}}		
\put (101.5,7){\textcolor{Plum}{\circle {4}}}	
\put (116.5,7){\textcolor{Plum}{\circle {4}}}	
\end{picture} \\
\begin{picture}(70,70)\setlength{\unitlength}{.015cm}		%
\usebox{\gridsm}
\put (37.5,116){\textcolor{RoyalBlue}{\circle* {7}}}
\put (22.5,101){\textcolor{RoyalBlue}{\circle* {7}}}
\put (84.5,131){\textcolor{RoyalBlue}{\circle* {7}}}
\put (84.5,116){\textcolor{RoyalBlue}{\circle* {7}}}
\put (116.5,131){\textcolor{RoyalBlue}{\circle* {7}}}
\put (116.5,101){\textcolor{RoyalBlue}{\circle* {7}}}
\put (7.5,54){\textcolor{RoyalBlue}{\circle* {7}}}	
\put (22.5,54){\textcolor{RoyalBlue}{\circle* {7}}}
\put (7.5,22){\textcolor{RoyalBlue}{\circle* {7}}}	
\put (37.5,22){\textcolor{RoyalBlue}{\circle* {7}}}
\thicklines
\put (69.5,131){\textcolor{Red}{\circle {10}}}	
\put (131.5,131){\textcolor{Red}{\circle {10}}}	
\put (7.5,69){\textcolor{Red}{\circle {10}}}	
\put (101.5,84){\textcolor{Red}{\circle {10}}}	
\put (131.5,84){\textcolor{Red}{\circle {10}}}	
\put (101.5,69){\textcolor{Red}{\circle {10}}}	
\put (7.5,7){\textcolor{Red}{\circle {10}}}		
\put (54.5,37){\textcolor{Red}{\circle {10}}}	
\put (69.5,37){\textcolor{Red}{\circle {10}}}	
\put (54.5,7){\textcolor{Red}{\circle {10}}}	
\end{picture}
\begin{picture}(70,70)\setlength{\unitlength}{.015cm}		%
\usebox{\gridsm}
\put (37.5,116){\textcolor{RoyalBlue}{\circle* {7}}}
\put (22.5,101){\textcolor{RoyalBlue}{\circle* {7}}}
\put (84.5,131){\textcolor{RoyalBlue}{\circle* {7}}}
\put (84.5,116){\textcolor{RoyalBlue}{\circle* {7}}}
\put (116.5,131){\textcolor{RoyalBlue}{\circle* {7}}}
\put (116.5,101){\textcolor{RoyalBlue}{\circle* {7}}}
\put (7.5,54){\textcolor{RoyalBlue}{\circle* {7}}}	
\put (22.5,54){\textcolor{RoyalBlue}{\circle* {7}}}
\put (7.5,22){\textcolor{RoyalBlue}{\circle* {7}}}	
\put (37.5,22){\textcolor{RoyalBlue}{\circle* {7}}}
\thicklines
\put (22.5,116){\textcolor{BrickRed}{\circle {7}}}	
\put (37.5,101){\textcolor{BrickRed}{\circle {7}}}	
\put (69.5,84){\textcolor{BrickRed}{\circle {7}}}	
\put (69.5,54){\textcolor{BrickRed}{\circle {7}}}	
\put (116.5,84){\textcolor{BrickRed}{\circle {7}}}	
\put (131.5,84){\textcolor{BrickRed}{\circle {7}}}	
\put (69.5,37){\textcolor{BrickRed}{\circle {7}}}	
\put (84.5,37){\textcolor{BrickRed}{\circle {7}}}	
\put (131.5,37){\textcolor{BrickRed}{\circle {7}}}	
\put (131.5,22){\textcolor{BrickRed}{\circle {7}}}	
\end{picture}
\begin{picture}(70,70)\setlength{\unitlength}{.015cm}		%
\usebox{\gridsm}
\put (37.5,116){\textcolor{RoyalBlue}{\circle* {7}}}
\put (22.5,101){\textcolor{RoyalBlue}{\circle* {7}}}
\put (84.5,131){\textcolor{RoyalBlue}{\circle* {7}}}
\put (84.5,116){\textcolor{RoyalBlue}{\circle* {7}}}
\put (116.5,131){\textcolor{RoyalBlue}{\circle* {7}}}
\put (116.5,101){\textcolor{RoyalBlue}{\circle* {7}}}
\put (7.5,54){\textcolor{RoyalBlue}{\circle* {7}}}	
\put (22.5,54){\textcolor{RoyalBlue}{\circle* {7}}}
\put (7.5,22){\textcolor{RoyalBlue}{\circle* {7}}}	
\put (37.5,22){\textcolor{RoyalBlue}{\circle* {7}}}
\thicklines
\put (22.5,116){\circle{4}}	
\put (37.5,101){\circle{4}}	
\put (54.5,69){\circle{4}}	
\put (84.5,69){\circle{4}}	
\put (101.5,69){\circle{4}}	
\put (101.5,54){\circle{4}}	
\put (54.5,22){\circle{4}}	
\put (54.5,7){\circle{4}}		
\put (101.5,7){\circle{4}}	
\put (116.5,7){\circle{4}}	
\end{picture}
\caption{A $2\times3$ portion of the collection of maximal caps that are in exactly one partition with $C$, enlarged to show detail.} 
\label{F:Some1comps}
\end{figure}
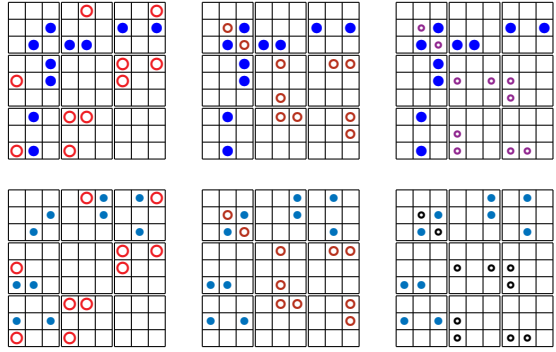

The entire collection of 36 maximal caps that make a 1-completable pair with $C$  are shown in Figure~\ref{F:The36}. One last feature is worth noting:  Every point in $AG(4,3)$ other than $a$ and the points in  $C$ is either in two of the demicaps $R_i$ or in two of the demicaps $C_j$.  Thus, every point other than $a$ and the points in $C$ is in 12 maximal caps that make a 1-completable pair with $C$.

        
        \begin{figure}
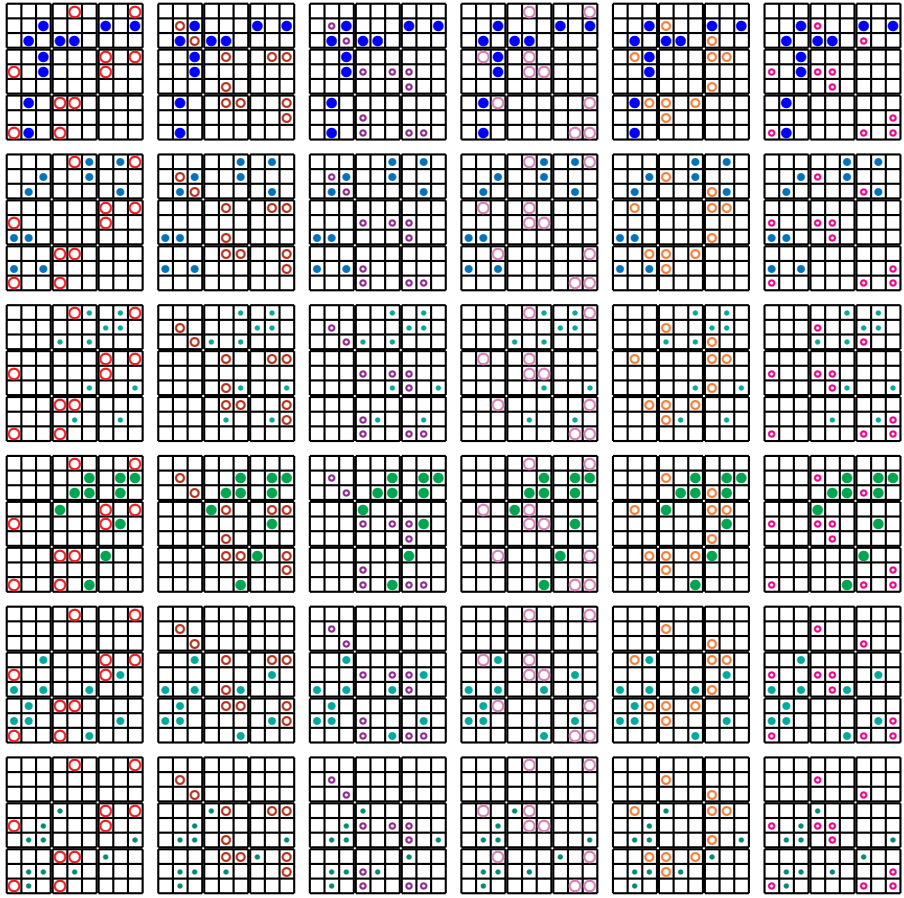

        \centering
        \begin{subfigure}[b]{0.15\textwidth}

        \end{subfigure}
        \caption{The 36 maximal caps in exactly one partition with the maximal cap $C$, shown in Figure~\ref{F:AG43cap}.}
        \label{F:The36}
        \end{figure}
        


\subsection{The action of the affine group and outer automorphisms of $S_6$.}
We finish this section with an exploration of the subgroup  $G$ of the affine group that fixes $C$ as a set. Because the affine group acting on $AG(4,3)$ is transitive on 1-completable pairs, any element of $G$ must permute the maximal caps that make a 1-completable pair with $C$.  

We start by fixing one particular maximal cap $C$.  As all maximal caps are affinely equivalent, without loss of generality we may choose the same cap $C$ we have been looking at throughout this paper (it is shown in Figure~\ref{F:AG43cap}).   As we saw in the previous subsections, the 36 maximal caps that make a 1-completable pair with $C$ are the unions of 12 demicaps: $\{R_i \cup C_j\},1\leq i,j \leq 6$. 

The affine group contains a subgroup $G$ consisting of the 2880 automorphisms of $AG(4,3)$ that fix $C$ as a set; this subgroup is discussed in \cite{MR3262358}.  Since $C$ is fixed as a set, the anchor point of $C$ is also fixed.  Since the anchor point for $C$ is $\vec0$, the subgroup of $G$ actually consists of linear transformations.  These linear transformations must permute the 36 maximal caps which are in exactly one partition with $C$.  Due to the structure of the 36 maximal caps that make a 1-completable pair with $C$, as outlined in the previous subsection, it must also be the case that the 12 special demicaps are also permuted by any automorphism in $G$.  

Suppose we have a linear transformation $\phi \in G$.  If $\phi(R_1) = R_i$, then each of the six maximal caps containing $R_1$ must have their images equal to one of the six maximal caps containing $R_i$. On the other hand, if  $\phi(R_1) = C_i$, then each of the six maximal caps containing $R_1$ must have their images equal to one of the six maximal caps containing $C_i$.  Because each of the columns intersects each of the rows in Figure~\ref{F:The36}, if $\phi(R_1)=R_i$, then $\phi(R_j) = R_{\sigma(j)}$ for some permutation $\sigma\in S_6$.  The same reasoning implies that if $\phi(R_1)=C_i$, then $\phi(R_j) = C_{\tau(j)}$ for some permutation $\tau\in S_6$. That means that the six rows and the six columns shown in Figure~\ref{F:The36} are permuted by $\phi$, in a particular way.  A given element of $G$ either permutes the $R_i$ and the $C_j$ separately, or it interchanges the $R_i$ and the $C_j$.

There are two particular transformations to note: the identity transformation, and the transformation that fixes $\vec0$ and reverses the two points that make a line with $\vec0$. Note that the second of these two is given by the matrix $-I$ (because the sum of the coordinates of the three points is $\vec0$, and one of the points is $\vec0$), so that transformation must be in the center of $G$; in addition, it fixes all lines through $\vec0$, so it fixes $C$, all maximal caps that make a 1-completable pair with $C$, and all demicaps with anchor $\vec0$ as sets. Thus $\{I, -I\}$ is a normal subgroup, and we can take the quotient group $G\slash\{I,-I\}$ to get 1440 affine transformations that  permute the demicaps and the maximal caps that make a 1-completable pair with $C$, in different ways. 

The following has been verified by using Mathematica \cite{Mathematica}. Of the 1440 affine transformations in the quotient group, 720 permute the set $\{R_i\},1\leq i\leq6$; that action is all of $S_6$.  For ease of exposition, we will identify the permutation on the demicaps $\{R_i\}$ by its action on the subscripts. We have identified permutations that act as $(12)$, $(13)$, \dots, $(16)$, and those permutations  generate all of $S_6$.  Similarly, those transformations permute the set of demicaps $\{C_j\},1\leq j\leq6$.  However, when a transformation acts on $\{R_i\}$ as (12), for example, it permutes the set $\{C_j\}$ as, for example, $(12)(34)(56)$ (the actual permutation will depend on how the set $\{C_j\}$ is numbered, of course, but it will always be a product of three disjoint transpositions).  Similarly, the transformation that acts as $(12)(34)(56)$ on $\{R_i\}$ will act on the $\{C_j\}$ as a transposition.  Further,  if the action of a transformation on  $\{R_i\}$ is a 6-cycle, the action of $g$ on  $\{C_j\}$ is a product of a 2-cycle and a 3-cycle.  This is precisely how outer automorphisms of $S_6$ work. 

The other 720 permutations will interchange the sets $\{R_i\}$ and $\{C_j\}$.  This action is a bit trickier,  because the permutation one gets will depend heavily on the ordering of the $R_i$ and the $C_j$, but with a carefully chosen numbering, the action is similar: the transformation that maps $R_1$ to $C_2$, $R_2$ to $C_1$, and $R_k$ to $C_k$ for $3\leq k \leq 6$, will map $C_1$ to $R_2$, $C_2$ to $R_1$, $C_3$ to $R_4$, $C_4$ to $R_3$, $C_5$ to $R_6$, and $C_6$ to $R_5$.  Again, we get the full action of $S_6$ with an outer automorphism twist.

\medskip

A closer look at the six demicaps $R_1,\dots,R_6$ in Figure~\ref{F:Rdemicaps} 
shows that each of the $R_i$ 
is the union of five of 15 $a$-lines. Further, if $i\neq j, 1\leq i,j \leq 6$,  $R_i \cap R_j$
is an $a$-line.  The same statements also hold for the the six demicaps  $C_1,\dots, C_6$ in Figure~\ref{F:Cdemicaps}, with a different set of 15 disjoint $a$-lines.  Readers familiar with the theory behind the outer automorphisms of $S_6$ might notice a similarity with synthemes and pentads, which have exactly the same structure: in the symmetric group $S_6$, there are 15 synthemes, each  the product of three disjoint 2-cycles.  There are exactly six pentads, namely sets of 5 synthemes with no 2-cycles in common.  The permutation of the pentads gives one of the standard descriptions of the outer automorphisms of $S_6$.  This can be found, for example, in Rotman's \emph{Introduction to Group Theory} \cite{MR1307623}.

\smallskip
It turns out that the action on the two block systems we've seen here is actually standard.  Inspired by this interesting situation, Jason Saied and Dantong Zhu (private communication) explored transitive actions of $S_6$ on sets of size 36.  They found that all transitive actions of $S_6$ on a set of 36 elements are canonically equivalent and take the same form as above. 
s

\section{Conclusion and future work}\label{S:future}

We have defined a new substructure of maximal caps in $AG(4,3)$ that provides a direct connection to the partitions of $AG(4,3)$ which consist of two 1-completable pairs of  maximal caps. Not only does a pair of demicaps whose union is a maximal cap $C$ provide a direct link to a particular maximal cap $C'$ in exactly one partition with the maximal cap that is the union of those two demicaps, but it is also possible to use that pair of demicaps to find the other 1-completable pair that is in the unique partition containing those two maximal caps.  In doing so, we have also identified the particular demicap decomposition of those two maximal caps that each give rise to the other.  Further, the demicap decomposition of $C'$ can be used to find the full collection of 36 maximal caps that are in exactly one partition with $C$.

There are still questions to explore, both with demicaps and with the partitions of $AG(4,3)$ into maximal caps and their associated anchor points.  For example, here are three questions.
\begin{itemize}
\item Given a partition that consists of two pairs of maximal caps, each of which is in only one partition, there are four mixed pairs of disjoint maximal caps (that is, one maximal cap from one pair and another maximal cap from another pair). Each of those mixed pairs is in 6 different partitions. In addition, any pair of disjoint maximal caps that are in six partitions are in only one partition where two pairs are each 1-completable; all the other partitions consists of two 2-completable pairs. Can demicaps be used to locate all partitions that a 6-completable pair is in? If not, is there some other way to find those partitions?
\item For a given maximal cap $C$, there are 72 
maximal caps $C'$ so that $\{C,C'\}$ are in 6 partitions.  Those 72 maximal caps pair up so that there are 36 pairs, one pair associated with each 1-completable pair that includes $C$.  Can our work here be used to distinguish those pairs?  
\item There are two equivalence classes of partitions under affine transformations. We have explored one of those equivalence classes.  Is there a similar substructure of maximal caps that helps to find the 2-completable pairs of maximal caps?  
\end{itemize}

Finally, it would be worth investigating whether a similar substructure of maximal caps exists in other dimensions.

\bibliographystyle{plain}
\bibliography{referencesset}

\end{document}